\theoremstyle{plain}
\newtheorem{theorem}{Theorem}[section]
\newtheorem*{theorem*}{Theorem}
\newtheorem{lemma}[theorem]{Lemma}
\theoremstyle{definition}
\newtheorem{remark}[theorem]{Remark}
\DeclareMathOperator{\Res}{Res}
\DeclareMathOperator{\red}{red}
\newcommand{\enm}[1]{\ensuremath{#1}}          %
\newcommand{\cal}[1]{\mathcal{#1}}
\newcommand{\NN}{\enm{\mathbb{N}}}
\newcommand{\ZZ}{\enm{\mathbb{Z}}}
\newcommand{\PP}{\enm{\mathbb{P}}}
\newcommand{\KK}{\enm{\mathbb{K}}}
\newcommand{\Ii}{\enm{\cal{I}}}
\newcommand{\Oo}{\enm{\cal{O}}}
\renewcommand{\phi}{\varphi}
\renewcommand{\theta}{\vartheta}
\renewcommand{\epsilon}{\varepsilon}
\renewcommand{\to}[1][]{\xrightarrow{\ #1\ }}
\newcommand{\old}[1]{}
\begin{document}

\title[double lines]
{The Hilbert function of general unions of lines and double lines in the projective space}
\author{Edoardo Ballico}
\address{Dept. of Mathematics\\
 University of Trento\\
38123 Povo (TN), Italy}
\email{ballico@science.unitn.it}
\thanks{The author was partially supported by MIUR and GNSAGA of INdAM (Italy).}
\subjclass[2010]{Primary 14N05; Secondary 14H50}
\keywords{double lines; postulation; Hilbert function; maximal rank curve}

\begin{abstract}
We study the Hilbert function of a general union $X\subset \PP^3$ of $x$ double lines and $y$ lines. In many cases (e.g.
always for $x=2$ and $y\ge 3$ or for  $x=3$ and $y\ge 2$
or for $x\ge 4$ and $y\ge \lceil(\binom{3x+4}{3} -27x-12)/(3x+2)\rceil +3-x$) we prove that $X$ has maximal rank. We give a few
examples of $x$ and $y$ for which $X$ has not maximal rank.
\end{abstract}

\maketitle

\section{Introduction}

Let $X\subset \PP^r$ be a closed subscheme. Fix $d\in \NN$. Recall that $X$ is said to have \emph{maximal rank in degree $d$} or \emph{maximal rank with respect to the line bundle $\Oo_{\PP^r}(d)$}  if  the
restriction map $H^0(\Oo_{\PP^r}(d))\to H^0(\Oo _X(d))$ has maximal rank as a linear map between finite-dimensional vector
spaces, i.e. it is either injective or surjective. Since $h^1(\Oo_{\PP^r}(d)) =0$,  $X$ has maximal rank in degree $d$ if and only if either $h^0(\Ii _X(d)) =0$ or $h^1(\Ii _X(d))=0$. The scheme $X$ is said to have \emph{maximal rank} if it has maximal rank in all degrees $d\in \NN$. Thus the scheme $X$ has maximal rank if and only if for each $d\in \NN$ either
$h^0(\Ii _X(d)) =0$ or $h^1(\Ii _X(d))=0$. In our case (general unions of lines and double lines), all integers
$h^0(\Oo_X(d))$, $d\in
\NN$, are known and hence knowing that $X$ has maximal rank means that
$h^0(\Ii _X(d)) =\max \{0,\binom{r+d}{r}-h^0(\Oo_X(d))\}$ for all $d$. 
 For any line
$L\subset
\PP^3$ let
$2L$ denote the closed subscheme of
$\PP^3$ with
$(\Ii _L)^2$ as its ideal sheaf. We say that $2L$ is a double line and that $L$ is the reduction of $2L$. The Hilbert
polynomial and the Hilbert function of $2L$ are well-known. Indeed, $h^0(\Oo _{2L}(d)) =3d+1$ for all $d$, $h^1(\Ii_{2L}(t))
=0$ for all $t\in \ZZ$ and $h^0(\Ii_{2L}(t)) =0$ if and only if $t\le 1$ (see Remark
\ref{16jun1}). Thus
$2L$ has maximal rank. R. Hartshorne and A. Hirschowitz proved that for all integers $r\ge 3$ and $b>0$ a general union
$X\subset \PP^r$ of $b$ lines has maximal rank. After several decades the problem was resurrected (\cite{a,bdsss,ccg1}). In \cite{a} it
was completely determined the Hilbert function of a general union $X\subset \PP^r$ of one double line and a prescribed number
of lines. In this paper we only consider the case
$r=3$. Any reader of \cite{a,hh} knows that this is from the numerical point of view the hardest and longest step and an
important step for the inductive proof of the general step. It has a key numerical simplification: we know exactly the Hilbert
polynomial of all schemes $X\subset \PP^3$ we need to prove to have maximal rank. We believe that we made a very efficient
use of it.

For all $(a,b)\in \NN^2\setminus \{(0,0)\}$ let $Z(a,b)$ denote the set of all schemes $X\subset \PP^3$ with $a+b$ connected
components, $a$ of them being double lines and $b$ of them being lines. The set $Z(a,b)$ is a smooth and irreducible
quasi-projective subset of the Hilbert scheme of $\PP^3$. Fix $X\in Z(a,b)$. Since $h^1(\Oo_X(t)) =0$ and $h^0(\Oo_X(t))
=a(3t+1)+b(t+1)$ for all
$t\in
\NN$ (Remark \ref{16jun1}), $X$ has maximal rank if and only if $h^0(\Ii_X(t)) =\max \{0,\binom{t+3}{3} -a(3t+1)-b(t+1)\}$ for
all $t\in
\NN$.

Let $A\subset \PP^3$ be any union of $a$ disjoint double lines. The
projective space $|\Ii _A(d)|$ parametrizes all degree $d$ surfaces of $\PP^3$ singular at all points of $A_{\red}$.
Thus if a general $X\in Z(a,b)$ has maximal rank with respect to the line bundle $\Oo_{\PP^3}(d)$, then we know the
dimension of the projective space parametrizing all degree $d$ surfaces containing $a+b$ general lines and singular at
all points of $a$ of them.

We prove the following results.
\begin{theorem}\label{i1}
A general $X_{2,b}\in Z(2,b)$ has maximal rank with respect to $\Oo _{\PP^3}(d)$, unless $b=2$ and $d=4$.
Moreover, $h^0(\Ii_{X_{2,2}}(4)) =1$ and $h^1(\Ii_{X_{2,2}}(4))=2$.
\end{theorem}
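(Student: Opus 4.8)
The plan is to turn maximal rank into finitely many cohomology vanishings and then prove those by repeated residuation along a plane. Write $X=X_{2,b}$ and set
\[
\chi_b(d):=\binom{d+3}{3}-2(3d+1)-b(d+1)=h^0(\Ii_X(d))-h^1(\Ii_X(d)).
\]
Maximal rank in degree $d$ means $\min\{h^0(\Ii_X(d)),h^1(\Ii_X(d))\}=0$, so I must prove $h^0(\Ii_X(d))=0$ when $\chi_b(d)\le 0$ and $h^1(\Ii_X(d))=0$ when $\chi_b(d)\ge 0$. Since $h^0(\Ii_X(d))$ is nondecreasing in $d$, the first assertion need only be checked at the largest degree $d^-$ with $\chi_b(d^-)\le 0$. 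For the second, a general plane $H$ meets $X$ in two general $2$-fat points and $b$ general simple points of $H=\PP^2$ (a degree $3$ double line meets $H$ transversally in a scheme $V(\mm_p^2)$ of length $3$); an elementary fat-point computation gives $h^1(\Ii_{X\cap H,H}(m))=0$ for all $m\ge 3$, the only failure being two $2$-fat points on a conic at $m=2$. Hence the sequence
\[
0\to \Ii_X(d)\to \Ii_X(d+1)\to \Ii_{X\cap H,H}(d+1)\to 0
\]
propagates the vanishing of $h^1$ upward and reduces the second assertion to the single degree $d^+:=d^-+1$. So for each $b$ there are essentially two critical degrees.

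To settle them I would induct on $b$ (and on $d$) by specializing components into $H$. Recall the residuation behaviour: if the reduction $L$ of a double line lies in $H$ then $\Res_H(2L)=L$ while $(2L)\cap H$ is a double line of $H$; a line $M\subset H$ gives $\Res_H(M)=\emptyset$ with trace $M$; a component in general position contributes itself to $\Res_H(X)$ and its transverse section to the trace. Feeding a suitable subset of specialized components into
\[
0\to \Ii_{\Res_H(X)}(d-1)\to \Ii_X(d)\to \Ii_{X\cap H,H}(d)\to 0
\]
splits each critical statement into a plane problem on $H$ (points, lines and double lines of $\PP^2$, governed by their Hilbert function) and a residual problem in $\PP^3$ of degree $d-1$ in which one double line is replaced by its reduction. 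The residual schemes lie in $Z(1,b')$ or are unions of lines, which have maximal rank by \cite{a} and by Hartshorne--Hirschowitz; the base cases $Z(2,0)$ and $Z(2,1)$ succumb to the same residuation in the few relevant degrees. For $b=2$ the top degree $d^-=4$ is itself exceptional, so one instead checks $h^0(\Ii_{X_{2,2}}(3))=0$ directly and treats $d=4$ by hand. I expect the main obstacle to be the balancing when $\chi_b(d)$ is close to $0$: there the trace and residual Euler characteristics cannot both be given the right sign by an honest specialization, and one must pass to the differential Horace method (degenerating a line so as to become tangent to $H$, replacing a section by a length-one scheme normal to $H$) in order to move the single leftover condition to the correct side.

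The degree $(b,d)=(2,4)$ is exactly where this balancing cannot be repaired, and it is computed explicitly. Here $\chi_2(4)=35-26-10=-1$, so once $h^0(\Ii_{X_{2,2}}(4))=1$ is shown the identity $\chi=h^0-h^1$ forces $h^1=2$. Choosing coordinates with $L_1=\{x_0=x_1=0\}$ and $L_2=\{x_2=x_3=0\}$, a quartic singular along $L_1\cup L_2$ lies in $(x_0,x_1)^2\cap(x_2,x_3)^2$, whose degree-$4$ part is $\langle x_0,x_1\rangle^2\cdot\langle x_2,x_3\rangle^2$ of dimension $9$; via the projections $\pi_{L_1},\pi_{L_2}\colon\PP^3\dashrightarrow\PP^1$ these quartics correspond bijectively to sections of $\Oo(2,2)$ on $\PP^1\times\PP^1$. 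A general line $M$ maps isomorphically onto a $(1,1)$-curve $\Gamma_M$, and a quartic through $X_{2,2}$ corresponds to a $(2,2)$-form vanishing on $\Gamma_{M_1}\cup\Gamma_{M_2}$; since this union already has class $(2,2)$, such a form must be the product of the two $(1,1)$-forms, so the space is one-dimensional. Equivalently the unique quartic is $Q_1Q_2$, where $Q_i$ is the unique quadric through $L_1\cup L_2\cup M_i$: it is singular along $L_1\cup L_2$ and contains $M_1,M_2$, giving both $h^0(\Ii_{X_{2,2}}(4))\ge 1$ and, by the $\PP^1\times\PP^1$ argument, $h^0\le 1$. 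This pins down the exception and completes the scheme of proof.
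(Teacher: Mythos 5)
Your computation of the exceptional case is correct and genuinely different from the paper's: you identify the $9$-dimensional space of quartics singular along $L_1\cup L_2$ with $H^0(\PP^1\times\PP^1,\Oo(2,2))$ and note that the two extra lines map to a divisor of class $(2,2)$, forcing $h^0(\Ii_{X_{2,2}}(4))=1$ and hence $h^1=2$ from $\chi=-1$; the paper instead residuates with respect to the quadric through $A_{\red}\cup R$ and uses that three general skew lines lie on a unique quadric. Your reduction to two critical degrees for each $b$ is also essentially the paper's Lemma \ref{d1} (there done via Castelnuovo--Mumford), modulo the caveat that $h^1(\Ii_{X\cap H,H}(m))=0$ can only be asserted when $\binom{m+2}{2}\ge 6+b$, which does hold in the range you need.

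The genuine gap is in the inductive engine. To make the trace absorb enough conditions at the critical degree you must specialize on the order of $d$ components into the auxiliary surface, and in a plane $H$ any two lines meet: the flat limit of a general union of skew lines, as several of them are specialized into $H$, is not the reduced union of the coplanar lines but acquires an embedded point at each pairwise intersection (compare Remark \ref{cv04}, where a reducible conic must be completed by the double point at its node before it is a flat limit of two skew lines). Your residual and trace computations are therefore carried out on a scheme that need not be a degeneration of an element of $Z(2,b)$, so semicontinuity gives nothing; and if you do add the required embedded points, the trace on $H$ is no longer a union of points, lines and planar double lines, so your plane lemma does not apply as stated. This is exactly why the paper, following Hartshorne--Hirschowitz, specializes onto a smooth quadric $Q$, where an entire ruling supplies arbitrarily many pairwise disjoint lines, and why its assertions $C_e(a,d)$, $E(a,d)$, $F(a,d)$ are formulated on $Q$ with reducible conics whose nodes lie on $Q$. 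Relatedly, the proposed differential Horace step of ``degenerating a line so as to become tangent to $H$'' is not available: a line is either contained in $H$ or meets it transversally in one point, so there is no tangential degeneration to exploit. As written, the induction does not go through; replacing $H$ by a smooth quadric and controlling the nodes is not a cosmetic change but the essential point of the argument.
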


\begin{theorem}\label{i2}
A general $X_{3,b}\in Z(3,b)$ has maximal rank with respect to $\Oo _{\PP^3}(d)$, unless $(b,d)\in \{(0,4),(1,5)\}$. Moreover,
$h^0(\Ii _{X_{3,0}}(4)) =1$,
$h^1(\Ii_{X_{3,0}}(4))=5$,
$h^0(\Ii_{X_{3,1}}(5)) =4$, and $h^1(\Ii_{X_{3,1}}(5))=2$.
\end{theorem}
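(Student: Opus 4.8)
The plan is to keep the number of double lines equal to three and to prove the statement one degree at a time, using throughout the residual exact sequence of the Horace method. Since $h^0(\Ii_X(d))$ is nondecreasing in $d$, its vanishing at one degree forces injectivity of the restriction map in all lower degrees; dually, once $h^1(\Ii_X(d))$ vanishes it continues to vanish in all higher degrees, because the general plane section of $X$ eventually imposes independent conditions. Hence it suffices to treat the finitely many \emph{critical} degrees $d$ at which the expected value $\max\{0,\binom{d+3}{3}-9d-3-b(d+1)\}$ of $h^0(\Ii_X(d))$ passes through $0$, together with the two exceptional pairs; for $b$ large relative to $d$ one falls back on the Hartshorne--Hirschowitz result for lines. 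The first step is therefore to tabulate, for each $b$, these critical degrees.

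The engine is the sequence
\[ 0 \to \Ii_{\Res_H X}(d-1) \to \Ii_X(d) \to \Ii_{X\cap H,H}(d)\to 0 \]
for a general plane $H\cong\PP^2$, applied after specializing some components of $X$ onto $H$. I would exploit the two degeneration modes of a double line $2L$: if $L$ is laid on $H$, the residual of $2L$ is the single line $L$ while its trace is a double line of $H$ (imposing $2d+1$ conditions); if $L$ meets $H$ transversally, the residual is again $2L$ and the trace is a length-$3$ fat point (imposing $3$ conditions). Ordinary lines degenerate in the familiar way. The bookkeeping is to lay exactly one double line on $H$, so that the residual carries precisely two double lines and is a general member of some $Z(2,b')$ in degree $d-1$, governed by Theorem \ref{i1}, while the trace on $H$ is a general union in $\PP^2$ of one double line, some lines and some $2$-fat points, whose postulation is controlled by the known maximal-rank statements for such planar configurations.

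Running the sequence then yields maximal rank for $X$ in the chosen critical degree provided (i) the residual lies in the non-exceptional range of Theorem \ref{i1}, so in particular one must avoid the forbidden pair $(b',d-1)=(2,4)$, and (ii) the planar trace has the expected Hilbert function. When the components do not partition $H^0(\Oo_H(d))$ evenly, which is the generic situation for three double lines plus a variable number of lines, I would switch to the Horace differential technique, imposing only part of the trace of one component on $H$ so as to interpolate between the two integer regimes and land on the exact expected dimension.

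The main obstacle is genuinely the two exceptions $(b,d)=(0,4)$ and $(1,5)$, which cannot emerge from any balancing argument and must be analyzed directly: one shows, by an explicit construction or by computing the linear system of quartics (respectively quintics) singular along three general lines, that $h^0(\Ii_{X_{3,0}}(4))=1$ and $h^1=5$, and $h^0(\Ii_{X_{3,1}}(5))=4$ and $h^1=2$. These unexpected sections are the true content of the statement, and they force the neighbouring degrees---notably $d=5$ when $b=0$, where the downward propagation of injectivity meets the anomalous quartic and its multiples---to be handled separately, so that the anomaly does not contaminate the propagation to higher degrees. The remaining difficulty, as always in Horace inductions, is organizing the base cases of small $b$ so that every critical degree either reduces cleanly to Theorem \ref{i1} or is checked by hand.
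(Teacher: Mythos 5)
Your overall architecture (reduce to the critical degree via Castelnuovo--Mumford, compute the two exceptional cases by hand, run a Horace induction on the rest) matches the paper's, but the core mechanism you propose does not work in $\PP^3$: you cannot use a \emph{plane} as the auxiliary divisor. A plane $H\subset\PP^3$ contains no two disjoint lines, so the only component of $X$ you can specialize into $H$ is a single (double) line, exactly as you describe. The resulting trace then has degree $(2d+1)+6+b$, while $h^0(\Oo_H(d))=\binom{d+2}{2}\sim d^2/2$; at the critical degree one has $b\sim d^2/6$, so the trace undershoots $H^0(\Oo_H(d))$ by roughly $d^2/3$, and correspondingly the residual scheme $2L_1\cup 2L_2\cup L_3\cup(\text{$b$ lines})$ imposes about $d^2/3$ more conditions than $h^0(\Oo_{\PP^3}(d-1))$. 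Hence $h^1(\Ii_{\Res_H X}(d-1))>0$ unavoidably and the residual sequence gives no control on $h^1(\Ii_X(d))$; dually, for the injectivity statement the trace is too small to have $h^0=0$ on $H$. The differential Horace trick can absorb a bounded remainder, not a deficit growing like $d^2$. This is precisely why Hartshorne--Hirschowitz, and this paper, specialize onto a smooth \emph{quadric} $Q$: a ruling $|\Oo_Q(1,0)|$ accommodates arbitrarily many pairwise disjoint lines, so one can load enough components onto $Q$ to balance trace against residual (Lemmas \ref{3lu6}, \ref{3lu7}, \ref{c8}, \ref{3lu3}), with reducible conics noded on $Q$ and the Ciliberto--Miranda result on arrows handling the integer remainders $c_{a,d}$, $v_{a,d}$.

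Two smaller points. First, your fallback ``for $b$ large relative to $d$ use Hartshorne--Hirschowitz'' does not cover the range actually needed: the assertion $F(3,d)$ requires $h^0(\Ii_X(d-1))=0$ for $b=b_{3,d-1}+1$, and for that $b$ the reduced curve $X_{\red}$ still satisfies $h^0(\Ii_{X_{\red}}(d-1))>0$ (the double structures account for roughly $6d$ of the conditions), so vanishing for $X$ cannot be deduced from the lines-only result. Second, your reduction of the residual to Theorem \ref{i1} (induction on the number of double lines) is a reasonable alternative to the paper's reduction to the $a=0$ assertions $C_a(0,d-2)$, $C_a(0,d-3)$, but it only becomes usable once the specialization is performed on a quadric rather than a plane. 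The explicit identification and computation of the exceptions $(b,d)=(0,4),(1,5)$ is correctly flagged but not carried out; in the paper it follows quickly from the unique quadric through three general skew lines (Lemmas \ref{c2} and \ref{c4}).
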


\begin{theorem}\label{i3}
Fix integers $a\ge 4$, $b\ge 0$ and $d\ge 3a+2$. A general $X\in Z(a,b)$ has maximal rank with respect to $\Oo_{\PP^3}(d)$.
\end{theorem}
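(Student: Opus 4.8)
The plan is to fix $a\ge 4$ and argue by induction on $d$, starting from $d=3a+2$, after first cutting down the range of $b$ that must be treated directly. For $X\in Z(a,b)$ the sequence $0\to\Ii_X(d)\to\Oo_{\PP^3}(d)\to\Oo_X(d)\to 0$ together with the vanishings recorded in Remark \ref{16jun1} gives $h^0(\Ii_X(d))-h^1(\Ii_X(d))=\chi$, where $\chi:=\binom{d+3}{3}-a(3d+1)-b(d+1)$. Using $0\to\Ii_{X\cup L}(d)\to\Ii_X(d)\to\Oo_L(d)\to 0$ for a general line $L$ disjoint from $X$, one reads off $h^0(\Ii_{X\cup L}(d))\le h^0(\Ii_X(d))$ and, from the same sequence the other way, $h^1(\Ii_{X\setminus L}(d))\le h^1(\Ii_X(d))$. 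Thus the vanishing $h^0=0$ propagates to all larger $b$ and $h^1=0$ to all smaller $b$, so it suffices to establish $h^1(\Ii_X(d))=0$ at the largest $b$ with $\chi\ge 0$ and $h^0(\Ii_X(d))=0$ at the smallest $b$ with $\chi\le 0$. This concentrates the work at the one or two critical values of $b$ for each degree $d$.

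The engine of the induction is the residuation (Horace) sequence for a plane $H\cong\PP^2$,
\[
0\to \Ii_{\Res_H(X_0)}(d-1)\to \Ii_{X_0}(d)\to \Ii_{X_0\cap H,\,H}(d)\to 0,
\]
applied to a carefully specialized $X_0$. I would place $\ell$ of the lines and the reductions of $k$ of the double lines inside $H$, keeping the remaining components transverse to $H$. A local computation of the same type as in Remark \ref{16jun1} shows that the residual of a double line whose reduction lies in $H$ is its reduced line, the residual of a transverse double line is the whole double line, and a line in $H$ has empty residual; hence $\Res_H(X_0)$ is a union of $a-k$ double lines and $(b-\ell)+k$ lines, i.e.\ a (nongeneral) member of $Z(a-k,(b-\ell)+k)$ in degree $d-1$, to which the inductive hypothesis applies once $d-1\ge 3(a-k)+2$ is checked. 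Because only upper semicontinuity in the favorable direction is used — exhibiting a single $X_0$ with $h^0(\Ii_{X_0}(d))=0$ (resp.\ $h^1=0$) forces the same for the general member — the specialization is legitimate, although one must carry an inductive statement robust enough to allow the $k$ residual lines to be constrained to $H$.

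Controlling the trace term is the second ingredient. The scheme $X_0\cap H$ is a general union in $\PP^2$ of $k$ double lines, $\ell$ lines, $a-k$ double points (the length-$3$ traces $\mathfrak m_p^2$ of the transverse double lines) and $b-\ell$ simple points. Its postulation is a plane problem that I would settle by the analogous but lower-dimensional Castelnuovo induction on a line contained in $H$, invoking the classical behaviour of general double points in $\PP^2$, whose only failures occur in low degree and are therefore irrelevant for the large $d$ occurring here. Since Euler characteristics are additive along the displayed sequence, $\chi=\chi\bigl(\Ii_{\Res_H(X_0)}(d-1)\bigr)+\chi\bigl(\Ii_{X_0\cap H,H}(d)\bigr)$, and the integers $k,\ell$ are to be chosen as functions of $(a,b,d)$ precisely so that this nonnegative (resp.\ nonpositive) $\chi$ splits into two pieces of the same sign, each placing its term in the maximal-rank range; the sequence then transmits $h^0=0$ or $h^1=0$ to the middle term.

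At the base degree $d=3a+2$ one cannot lower $d$ while keeping $a$ fixed without violating $d-1\ge 3a+2$, so there $k\ge 1$ is forced and the induction descends in $a$, terminating at $a=2$ and $a=3$, which are exactly Theorems \ref{i1} and \ref{i2}. The main obstacle is the simultaneous bookkeeping in the inductive step: one must verify that for every admissible $(a,b,d)$ there is a choice of $(k,\ell)$ making both terms of the residual sequence nonspecial, a finite system of inequalities and congruences that can tighten near the critical $b$. I expect these borderline cases to require the differential Horace method — degenerating the double lines lying in $H$ to first-order data along a line — to absorb the length-$3$ fat points and the plane double lines without the rounding losses a naive count produces. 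This delicate numerical matching, rather than any single cohomological vanishing, is where the real difficulty lies.
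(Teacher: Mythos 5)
Your strategy is genuinely different from the paper's. You propose the classical Hartshorne--Hirschowitz plane Horace induction: specialize $k$ double-line reductions and $\ell$ lines into a plane $H$, descend from degree $d$ to $d-1$ (and, at the base $d=3a+2$, from $a$ to $a-k$), and control the trace in $\PP^2$. The paper instead makes a single specialization onto a smooth quadric $Q$: all $a$ double lines have their reductions placed in one ruling of $Q$, so that $\Res_Q(2L)=L$ kills every double line at once and the residual is a union of \emph{lines and reducible conics only}. Everything then reduces, via Lemmas \ref{3lu6} and \ref{3lu7}, to the assertions $C_a(0,d-2)$ and $C_a(0,d-3)$ --- statements about the $a=0$ case two or three degrees down, resting on Hartshorne--Hirschowitz and on the postulation of general double points of $Q$ (\cite{laf,v}) --- while the entire numerical side collapses to Lemma \ref{n3lu1}, which is immediate from $d\ge 3a+2$. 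The quadric route buys exactly what your route lacks: it never has to re-enter the class $Z(a',b')$ with $a'>0$ at a lower degree, so no induction on $d$ or on $a$ inside the doubled world is needed.

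As a proof, your proposal has genuine gaps. First, the residual scheme $\Res_H(X_0)$ contains $k$ lines constrained to lie in $H$; since semicontinuity runs the wrong way for a specialization, the inductive hypothesis for a \emph{general} element of $Z(a-k,b-\ell+k)$ gives no information about this constrained configuration. You flag the need for a "robust enough" inductive statement but never formulate or prove it; this is precisely the role played by the parameter $e$ in the paper's $C_e(0,m)$ (and by $H'_{3k-1}$ in \cite{hh}), and without it the induction does not close --- in particular the base case cannot terminate at Theorems \ref{i1} and \ref{i2}, which are stated only for general members. Second, the existence, for every admissible $(a,b,d)$, of a pair $(k,\ell)$ putting both the residual and the trace in the maximal-rank range is the entire content of the theorem in this approach, and you explicitly defer it ("where the real difficulty lies"), together with an unspecified appeal to the differential Horace method for borderline cases. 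A plan that postpones exactly the step it identifies as the hard one is not yet a proof; the paper's choice of a quadric rather than a plane is what makes the corresponding bookkeeping (one inequality, $d\ge 3a+2$) trivial.
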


\begin{theorem}\label{i4}
Fix integers $a\ge 4$ and $b\ge \lceil(\binom{3a+4}{3} -27a-12)/(3a+2)\rceil +3-a$. Then a general $X\in Z(a,b)$ has
maximal rank.
\end{theorem}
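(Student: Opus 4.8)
The plan is to reduce ``maximal rank in every degree'' to a single vanishing statement in the top unhandled degree, and then attack that statement by a Castelnuovo--Horace degeneration onto a plane. Since maximal rank in degree $d$ is automatic as soon as $h^0(\Ii_X(d))=0$, I would treat each degree separately. For $d\ge 3a+2$ the assertion is exactly Theorem \ref{i3}, valid for every $b\ge 0$. For $d\le 3a+1$ I would instead prove directly that $h^0(\Ii_X(d))=0$ for general $X\in Z(a,b)$. Because multiplication by a general linear form embeds $H^0(\Ii_X(d-1))$ into $H^0(\Ii_X(d))$, the single vanishing $h^0(\Ii_X(3a+1))=0$ propagates to all smaller degrees, so the whole theorem collapses to the claim
\[
h^0(\Ii_X(3a+1))=0\qquad\text{for general }X\in Z(a,b).
\]

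Next I would reduce to the extremal value of $b$. Degenerating a general member of $Z(a,b)$ so that it contains a general member of $Z(a,b')$ with $b'<b$ only enlarges the ideal, so $h^0(\Ii_X(3a+1))=0$ for the smaller configuration forces the same vanishing for all larger $b$ by upper semicontinuity. Thus it suffices to prove the claim for the smallest admissible $b$. The numerical threshold for this vanishing is $h^0(\Oo_X(3a+1))=a(9a+4)+b(3a+2)\ge\binom{3a+4}{3}$; the bound in the statement sits a little above this naive threshold, the surplus being precisely what the degeneration below consumes.

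For the critical vanishing I would fix a plane $H\subset\PP^3$ and use the restriction sequence
\[
0\to \Ii_{\Res_H X}(3a)\to \Ii_X(3a+1)\to \Ii_{X\cap H,H}(3a+1)\to 0,
\]
which gives $h^0(\Ii_X(3a+1))\le h^0(\Ii_{\Res_H X}(3a))+h^0(\Ii_{X\cap H,H}(3a+1))$. Specializing $c$ of the lines to lie on $H$, the residual $\Res_H X$ is a general member of $Z(a,b-c)$ in degree $3a$ (the double lines meet $H$ transversally, so $\Res_H(2L)=2L$, while the $c$ planar lines drop out), whereas the trace on $H\cong\PP^2$ is the union of $c$ general lines, the $a$ length-$3$ fat points $(\Ii_{L_i})^2\cap H$ cut out by the double lines, and $b-c$ simple points. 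The idea is to choose $c$ so that both terms vanish: the trace is a maximal-rank question in the plane for a mixed system of lines, double points, and simple points, handled by the classical postulation of general fat points in $\PP^2$ after factoring out the $c$ lines, while the residual is a lower-degree instance of the same double-line problem, cleared by descending induction on $d$ with the trivial base case of degrees too small to admit any surface through $a$ general double lines.

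The main obstacle is making the two vanishings hold \emph{simultaneously} for one integer $c$. Requiring the trace to vanish forces $c$ to be large enough that the residual degree $3a+1-c$ carries no curve through the $a$ double points and $b-c$ points, i.e.\ $3a+(b-c)\ge\binom{3a+3-c}{2}$; requiring the residual to vanish forces $b-c$ to exceed the degree-$3a$ threshold for $Z(a,b-c)$, an upper bound on $c$. Nonemptiness of the resulting window for $c$ is exactly the content of the stated bound on $b$, and the ceiling together with the correction $3-a$ is the arithmetic residue of rounding $c$ to an integer and of the length-$3$ contribution of each double line to the planar trace. The delicate points are verifying that an admissible $c$ exists in the boundary case $b=b_{\min}$ and checking the finitely many small values where the postulation of general double points in $\PP^2$ has its classical exceptions; this is where the real work lies.
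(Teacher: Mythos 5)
Your two reduction steps agree with the paper: degrees $d\ge 3a+2$ are exactly Theorem \ref{i3}, and for $d\le 3a+1$ the hypothesis on $b$ forces $h^0(\Oo_X(d))\ge\binom{d+3}{3}$, so the whole theorem collapses to the single vanishing $h^0(\Ii_X(3a+1))=0$. But you never prove that vanishing; you sketch a plane--Horace strategy and explicitly defer the ``real work,'' and that deferred work is a genuine gap, not a routine verification. Two concrete problems with your route. First, $\Res_H(2L)=2L$ for a plane $H$ meeting $L$ transversally (the quotient $(\Ii_L)^2:\Ii_H$ equals $(\Ii_L)^2$), so the $a$ double lines survive intact at every step of your descending induction; the residual at stage $d$ is again a general member of some $Z(a,b')$, and you are forced to prove $h^0(\Ii(d))=0$ for general unions of $a$ double lines and lines in every degree from $3a$ down to your base case. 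That is essentially the content of Theorems \ref{i1}--\ref{i3} and their supporting lemmas, and it is not ``trivial'': the paper exhibits genuine failures of maximal rank in this range ($X_{4,0}$ and $X_{4,1}$ in degree $6$, Lemma \ref{t5} and Remark \ref{set1}), so an unqualified induction would hit counterexamples unless $b'$ stays large enough at every stage, which you have not arranged. Second, your planar trace is a union of lines, $a$ general double points and simple points in $\PP^2$, whose postulation has its own exceptional systems; ``handled by the classical postulation of general fat points'' requires checking that list against the actual parameters, which you do not do.

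The paper's proof of the key vanishing is one line and needs none of this machinery: a general $X_{a,b}$ contains the general element of $Z(3,b+a-3)$ obtained by keeping three of the double lines and replacing the other $a-3$ by their reductions, so $h^0(\Ii_{X_{a,b}}(3a+1))\le h^0(\Ii_{X_{3,b+a-3}}(3a+1))$, and the latter vanishes by Theorem \ref{i2} precisely when $b+a-3\ge\lceil(\binom{3a+4}{3}-27a-12)/(3a+2)\rceil$. That is where the bound in the statement comes from --- it is the threshold for Theorem \ref{i2} in degree $3a+1$, not the ``arithmetic residue'' of rounding a Horace parameter $c$. To complete your argument you should replace the entire Horace induction by this containment.
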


As far as we know the examples $X_{2,2}$, $X_{3,0}$ and $X_{3,1}$ were not known before. We got them trying to use the Horace
Lemma  for low degree surfaces and see when it fails (step (a) of the proof of Lemma \ref{c3}, Lemmas \ref{c2} and
\ref{c4}). We also prove that
$h^0(\Ii _{X_{4,0}}(6))=10$ and
$h^1(\Ii _{X_{4,0}}(6)) =2$ (Lemma \ref{t5}) and that $X_{4,1}$ has not maximal rank in degree $6$ (Remark \ref{set1}). To prove that they are exceptional is easy. Probably for $\PP^r$, $r>3$, a far
better strategy would be to use computers to detect potential cases and then trying to prove if they are exceptional.

Let $Z'(a,b)$ denote the set of all $X\subset \PP^3$ which are flat
limits of a family of elements of $Z(a,b)$. As in \cite{a,be,hh} we will use several times elements of $Z'(a,b)\setminus
Z(a,b)$ to prove statements for a general $X\in Z(a,b)$ using the semicontinuity theorem for cohomology.

We work over an algebraically closed field $\KK$ with characteristic $0$. We use this assumption to quote \cite{a, cm}.
Although the results proved in \cite{cm} are in general false in positive characteristic, the use of \cite{cm} (see
Remark \ref{cv03}) probably may be substituted with some hard work, numerical lemmas and intermediate lemmas, as was done in \cite{hh}.

We warmly thank a referee for very useful corrections and suggestions.

\section{Preliminaries and notation}

\begin{remark}\label{16jun1}
Let
$L\subset
\PP^3$,
be a line. Let
$2L$ denote the closed subscheme of
$\PP^3$ with
$(\Ii _L)^2$ as its ideal sheaf. Since the normal bundle $N_L$ of $L$ in $\PP^3$ is isomorphic to $\Oo_L(1)^{\oplus 2}$,
for each $t\in
\ZZ$ we have an exact sequence of coherent sheaves
\begin{equation}\label{eqa1}
0 \to \Ii_L(t-1)^{\oplus 2} \to \Ii_{2L}(t) \to \Ii _L(t)\to 0
\end{equation}
Thus $\chi (\Oo _{2L}(t)) = 3t+1$.
Fix integers $a\ge 0$, and $b\ge 0$ such that $(a,b)\ne (0,0)$.
Take any $X\in Z(a,b)$. Note that $\chi (\Oo _X(t)) = a(3t+1) +b(t+1)$.
From \eqref{eqa1} we get $h^1(\Oo _X(t))=0$,
$h^0(\Oo_X(t)) =a(3t+1)+b(t+1)$ for all $t\in \NN$ and $h^0(\Oo _X(t)) =0$ for all $t<0$. The exact
sequence $$0\to
\Ii_X(t)\to \Oo_{\PP^3}(t)\to \Oo_X(t)\to 0$$gives $h^2(\Ii_X(t)) =h^1(\Oo _X(t))=0$
for all $t\ge 0$, $h^1(\Ii_{2L}(t)) =0$ for all $t\in \ZZ$ and $h^0(\Ii _{2L}(t)) =0$ if and only if $t\le 1$. We often call
$X_{a,b}$ a general element of
$Z(a,b)$. 
\end{remark}

Let $M$ be a smooth quasi-projective variety. For any $p\in M$ let $(2p,M)$ denote the closed subscheme of $M$ with $(\Ii
_{p,M})^2$ as its ideal sheaf. We say that $(2p,M)$ is a double point of $M$. Note that $(2p,M)_{\red} =\{p\}$ and $\deg
((2p,M))=\dim M +1$. Now assume that $M$ is a linear subspace of $\PP^r$, $r\ge 1$.  Then $M$ is the minimal linear subspace
of $\PP^r$ containing $(2p,M)$. If $\dim M=2$ we say that $(2p,M)$ is a \emph{planar double point}. If $\dim M =1$ we say that
$(2p,M)$ is an \emph{arrow}. As in \cite{hh} we often use planar double points and arrows. If $M=\PP^3$ we often write $2p$
instead of $(2p,\PP^3)$. Thus $\deg (2p)=4$ and $2p$ spans $\PP^3$. Let $Q\subset \PP^3$ be a smooth quadric surface.
For any $o\in Q$ let $T_oQ$ denote the tangent plane of $Q$ at $o$. Note that $(2o,T_oQ)\subset Q$, i.e. $(2o,T_oQ)$ is the
closed subscheme of $Q$ with $(\Ii _{o,Q})^2$ as its ideal sheaf. We will say that $(2o,Q):= (2o,T_oQ)$ is the double point of $Q$ with $\{o\}$ as its reduction. Note that any arrow $v\subset (2o,T_oQ)$ is contained in $Q$. We will say that $v$ is an arrow
in $Q$ with $\{o\}$ as its reduction.

Fix a smooth quadric $Q\subset \PP^3$. For all $(a,u,v)\in \NN^3\setminus \{(0,0,0)\}$ let $W(a,u,v)$ denote the set of all
schemes $Y\subset \PP^3$ which have $a+u+v$ connected components, $a$ of them being double lines, $u$ of them being lines and
$v$ of them being reducible conics with singular point contained in $Q$. Note that $h^1(\Oo_Y(t)) =0$ and $h^0(\Oo _Y(t))
=a(3t+1) + u(t+1) +v(2t+1)$ for all $Y\in W(a,u,v)$. $W(a,u,v)$ is an irreducible subvariety of the Hilbert scheme
of $\PP^3$.  Elements in $W(a,u,v)$ are a key tool of our proofs. When we take $Y\in W(a,u,v)$ (resp. $X\in Z(a,u)$)  and
write $Y =A\cup U\cup V$ (resp.
$X =A\cup U$), then $A$ is the union of the double lines, $U$ is the union of the degree $1$ connected components and $V$ is the union of the reducible conics with singular point contained in $Q$.

For all $(a,d)\in \NN^2$ define the integers $b_{a,d}$, $c_{a,d}$, $u_{a,d}$ and $v_{a,d}$ by the relations
\begin{equation}\label{eqa1=}
a(3d+1) +(d+1)b_{a,d}+c_{a,d} =\binom{d+3}{3},\ 0 \le c_{a,d}\le d
\end{equation}
\begin{equation}\label{eqa2=}
a(3d+1) +(d+1)u_{a,d}-v_{a,d} =\binom{d+3}{3},\ 0 \le v_{a,d}\le d
\end{equation}

For $a\ge 0$, $e\ge 0$ and $d>0$ we define the following Assertions $C_e(a,d)$ and $C(a,d)$:

\quad {\bf Assertion} $C_e(a,d)$: We have $u_{a,d} \ge 2v_{a,d}+e$ and there is a curve $Y =A\cup U\cup V \in W(a,u_{a,d}-2v_{a,d},v_{a,d})$ such that $Q$ contains $e$ connected components of $U$ and $h^i(\Ii_Y(d))=0$, $i=0,1$.

\quad {\bf Assertion} $C(a,d)$: Set $C(a,d):= C_0(a,d)$.

By \eqref{eqa2=} $h^0(\Ii_Y(d)) =h^1(\Ii_Y(d))$ for any $Y\in W(a,u_{a,d}-2v_{a,d},v_{a,d})$.

Taking the difference between the equations in \eqref{eqa1=} for $(a,d)$ and in \eqref{eqa2=} for $(0,d-2)$ we get
\begin{equation}\label{eqa3=}
a(3d+1) +2u_{0,d-2}+(d+1)(b_{a,d}-u_{0,d-2})+c_{a,d} +v_{0,d-2}=(d+1)^2
\end{equation}

For all $a\ge 0$, $d\ge 2$ such that $a(3d+1) \le \binom{d+3}{3}$ the integer $b_{a,d}$ defined in \eqref{eqa1=} is
non-negative. For all such $a$, $d$ we define the following Assertions $E(a,d)$ and $F(a,d)$:

\quad {\bf Assertion} $E(a,d)$, $a\ge 0$, $d\ge 2$ and $a(3d+1)\le \binom{d+3}{3}$: A general $X\in Z(a,b_{a,d})$ satisfies
$h^1(\Ii_X(d)) =0$.  

\quad {\bf Assertion} $F(a,d)$, $a\ge 0$, $d\ge 3$ and $a(3d-2)\le \binom{d+2}{3}$: A general $X\in Z(a,b_{a,d-1}+1)$ satisfies
$h^0(\Ii_X(d-1)) =0$.

Take any $X\in Z(a,b_{a,d})$. By \eqref{eqa1=} $h^0(\Ii_X(d)) =c_{a,d}+h^1(\Ii_X(d))$.
Obviously if $h^1(\Ii_X(d))=0$, then a general $Y\in Z(a,b)$, $0\le b\le b_{a,d}$, satisfies $h^1(\Ii _Y(d))=0$.

\begin{remark}\label{2lu5}
Fix a general $X\in Z(a,b)$. If $a=0$, then $X$ has maximal rank (\cite{hh}) and in particular it has maximal rank with respect
to the line bundles $\Oo_{\PP^3}(1)$ and $\Oo_{\PP^3}(2)$. If $a>0$, then $h^0(\Ii_X(1))=0$. If $a=1$, then $X$ has maximal
rank with respect to the line bundle $\Oo_{\PP^3}(2)$. Now assume $a\ge 2$. Since any two skew lines span $\PP^3$ and the
singular locus of a quadric is a linear space, $h^0(\Ii_X(2)) =0$.
\end{remark}

\begin{remark}\label{3lu4}
For any integer $k>0$ we have $v_{0,3k+1} =v_{0,3k}=0$, $v_{0,3k-1}=2k$, $u_{0,3k-1} =(3k^2+3k+2)/2$, $u_{0,3k+1} =(3k+4)(k+1)/2$
and $u_{0,3k} = (k+1)(3k+2)/2$.
\end{remark}

\begin{remark}\label{cv03}
Let $W\subseteq H^0(\Oo_{\PP^3}(d))$ be any linear subspace. For any scheme $Z\subseteq \PP^3$ set $W(-Z):= W\cap
H^0(\Ii_Z(d))$. Fix an integer $e\ge 0$ and let $E\subset \PP^3$ be a general union of $e$ arrows of $\PP^3$. By \cite{cm}, we
have $\dim W(-E) =\max \{0,\dim W-2e\}$. We emphasize that we need that $E$ is general and in particular we need that
$E_{\red}$ is a general subset of $\PP^3$ with cardinality $e$. Fix a plane $H\subset \PP^3$. Let
$F\subset H$ be a general union of $e$ arrows in $H$. Applying the previous observation to the image of $W$ in
$H^0(\Oo_H(d))$ we get that either $\dim W(-F) =\max \{0,\dim W-2e\}$ or $\dim W(-H) \ge \dim W-2e+1$. Let $Q\subset \PP^3$ be
a smooth quadric. Fix $u\in \NN$ and $v\in \NN$. Let $W\subseteq H^0(\Oo_Q(u,v))$ be a linear subspace. Let $G\subset Q$ be a
general union of $e$ arrows in $Q$. Set $W(-G):= H^0(\Ii_G(u,v))\cap W$. By \cite{cm} we have $\dim W(-G) =\max \{0,\dim W
-2e\}$.
\end{remark}

\begin{remark}\label{cv04}
Let $T\subset \PP^3$,  be a reducible conic. Call $o$ the singular point of $T$. Set $E:= T\cup 2o$.
The scheme $E$ is a flat limit of a family of unions of $2$ skew lines of $\PP^3$ (\cite{ccg1}, \cite[Example 2.1.1]{hh}).
\end{remark}

\begin{remark}\label{c0}
Let $X\subset \PP^3$ be a closed subscheme. The residual scheme $\Res_Q(Y)$ of $Y$ with respect to the smooth quadric $Q$ is
the closed subscheme of $\PP^3$ with $\Ii_X:\Ii_Q$ as its ideal sheaf. The following exact sequence
\begin{equation}\label{eqc1}
0\to \Ii_{\Res_Q(X)}(t-2)\to \Ii_X(t)\to \Ii _{X\cap Q,Q}(t)\to 0
\end{equation}
is called the \emph{residual exact sequence with respect to $Q$}. The long cohomology exact sequence
of \eqref{eqc1} gives
$$h^i(\Ii_X(t))\le h^i(\Ii_{\Res_Q(X)}(t-2)) +h^i(Q,\Ii_{X\cap Q,Q}(t)), \ i\ge 0$$
We also obtain $h^0(\Ii _X(t)) \ge h^0(\Ii_{\Res_Q(X)}(t-2))$ with equality if $h^0(Q,\Ii_{X\cap Q,Q}(t))=0$.
For any plane $H\subset \PP^3$ the residual scheme $\Res_H(X)$ is the scheme with $\Ii_X:\Ii_H$ as its ideal sheaf. There is a
residual exact sequence with respect to $H$ similar to \eqref{eqc1} with $\Ii_{\Res_H(X)}(t-1)$ on the left.
\end{remark}

\begin{remark}\label{diff0}
Fix a smooth quadric $Q\subset \PP^3$, a scheme $X\subset \PP^3$ and positive integers $d$ and $s$. Let $S\subset
Q$ be a general subset of
$Q$ with cardinality
$s$. We have $h^0(\Ii _{X\cup S}(d)) =\min \{0,h^0(\Ii_X(d)) -s\}$ if $h^0(\Ii_{\Res_Q(X)}(d-2)) \le \min \{0,h^0(\Ii _X(d))
-s\}$. Thus if $h^1(\Ii_X(d)) =0$ and $h^0(\Ii _X(d))\ge h^0(\Ii_{\Res_Q(X)}(d-2))+s$, we have $h^1(\Ii _{X\cup S}(d))=0$. A
similar statement with $\Oo_{\PP^3}(d-1)$ instead of $\Oo_{\PP^3}(d-2)$ is true if instead of a quadric we take a plane.
\end{remark}

Fix  $(a,b)\in \NN^2$ such that either $a\ge 2$ or $a=1$ and $b>0$ or $b\ge 3$. The
\emph{critical value} of the pair $(a,b)$ or of any element of $Z'(a,b)$ is the minimal integer $d\ge 2$ such that $a(3d+1)
+b(d+1)\le \binom{d+3}{3}$. Call $1$ the critical value of $(1,0)$ and $(0,2)$.

\section{The main lemmas and the examples}

\begin{lemma}\label{d1}
Fix $(a,b)\in \NN^2$ such that either $a>0$ or $b\ge 2$. Fix any $X\in Z(a,b)$. Let $d$ be the critical value of the pair $(a,b)$.
$X$ has maximal rank if and only if $h^0(\Ii_X(d-1)) =0$ and $h^1(\Ii _X(d))=0$. 
\end{lemma}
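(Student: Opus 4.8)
The plan is to reduce the infinitely many maximal-rank conditions (one per degree) to the behaviour of the single Euler characteristic $\chi(t):=\chi(\Ii_X(t))=\binom{t+3}{3}-a(3t+1)-b(t+1)$. By Remark \ref{16jun1} we have $h^2(\Ii_X(t))=0$ for $t\ge 0$, and since $X$ is a curve the standard isomorphism $H^3(\Ii_X(t))\cong H^3(\Oo_{\PP^3}(t))$ gives $h^3(\Ii_X(t))=0$ for $t\ge -3$; hence $h^0(\Ii_X(t))-h^1(\Ii_X(t))=\chi(t)$ for all $t\ge 0$. Two elementary propagation principles will then do the work: multiplication by a general linear form shows that $h^0(\Ii_X(t))=0$ forces $h^0(\Ii_X(t-1))=0$, so the vanishing of $h^0$ spreads downward; and Castelnuovo--Mumford regularity shows that the vanishing of $h^1$ spreads upward.

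For the ($\Rightarrow$) direction, assume $X$ has maximal rank. In degree $d$ the definition of the critical value gives $\chi(d)\ge 0$ (for the two conventional pairs $(1,0)$ and $(0,2)$, where $d=1$, one checks $\chi(1)=0$ directly). Maximal rank in degree $d$ yields $h^0(\Ii_X(d))=0$ or $h^1(\Ii_X(d))=0$, and since $h^0(\Ii_X(d))-h^1(\Ii_X(d))=\chi(d)\ge 0$ with both terms nonnegative, either alternative forces $h^1(\Ii_X(d))=0$. In degree $d-1$, when $d\ge 3$ the minimality of $d$ among integers $\ge 2$ with $\chi\ge 0$ gives $\chi(d-1)<0$, whence $h^1(\Ii_X(d-1))>h^0(\Ii_X(d-1))\ge 0$; maximal rank in degree $d-1$ then forces $h^0(\Ii_X(d-1))=0$.

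For the ($\Leftarrow$) direction, assume $h^0(\Ii_X(d-1))=0$ and $h^1(\Ii_X(d))=0$. Downward propagation of the vanishing of $h^0$ gives $h^0(\Ii_X(t))=0$ for all $t\le d-1$. For the upper range I combine $h^1(\Ii_X(d))=0$ with the automatic vanishings $h^2(\Ii_X(d-1))=0$ (Remark \ref{16jun1}, as $d-1\ge 0$) and $h^3(\Ii_X(d-2))=0$ (as $d-2\ge -3$) to conclude that $\Ii_X$ is $(d+1)$-regular, so that $h^1(\Ii_X(t))=0$ for all $t\ge d$. Thus for every $t\ge 0$ either $h^0(\Ii_X(t))=0$ (when $t\le d-1$) or $h^1(\Ii_X(t))=0$ (when $t\ge d$), which is exactly maximal rank.

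The main obstacle is the bookkeeping at the small critical values $d\le 2$, where minimality of $d$ no longer forces $\chi(d-1)<0$ because the defining inequality is only tested for integers $\ge 2$. There the required vanishing $h^0(\Ii_X(d-1))=0$ must be obtained by hand: in degree $0$ it holds since $X\ne\emptyset$, and in degree $1$ it follows from Remark \ref{2lu5}, together with the exceptional conventions fixing the critical value of $(1,0)$ and $(0,2)$. Everything else is a formal consequence of the Euler-characteristic identity and the two monotonicity principles.
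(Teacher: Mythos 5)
Your proof is correct and follows essentially the same route as the paper's: Castelnuovo--Mumford regularity (from $h^1(\Ii_X(d))=0$ plus the automatic vanishing of $h^2$ and $h^3$) to push $h^1$-vanishing upward, the Euler-characteristic/critical-value inequalities at degrees $d$ and $d-1$ for the equivalences there, and a direct check of $h^0(\Ii_X(d-1))=0$ when $d\le 2$. You are merely more explicit than the paper about the downward propagation of $h^0$-vanishing via multiplication by a linear form, which the paper leaves implicit.
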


\begin{proof}
Recall that $h^1(\Oo _X(x))=0$ and $h^0(\Oo _X(x)) = a(3x+1)+b(x+1)$ for all $x\ge 0$. Since $\dim X=1$ we have $h^i(\Oo _X(z))
=0$ for all $i\ge 2$ and all $z\in
\ZZ$. By the definition of critical value $X$ has maximal rank with respect to $\Oo _{\PP^3}(d)$ if and only if $h^1(\Ii
_X(d)) =0$. Assume $h^1(\Ii _X(d)) =0$. The exact sequence $$0 \to \Ii_X(x)\to \Oo_{\PP^3}(x)\to \Oo _X(x)\to 0$$ gives
$h^i(\Ii _X(d+1-i)) =0$ for all $i\ge 1$. The Castelnuovo-Mumford's lemma gives $h^i(\Ii _X(z)) =0$ for all $z>d$. Thus $X$
has maximal rank if
$h^0(\Ii _X(d-1)) =0$. To see that the condition $h^0(\Ii _X(d-1)) =0$ is a necessary condition for the maximal rank of $X$ it
would be sufficient to prove that
$a(3(d-1)+1) +bd \ge \binom{d+2}{3}$. If $d\ge 3$ this inequality is true by the definition of critical value. Note that every $Y\in Z(a,b)$ satisfies $h^0(\Ii _Y(1))
=0$ if $a>0$. If $a=0$ we use that that no plane contains two disjoint lines. Since $b\ge 2$, $h^0(\Ii _X(1)) =0$.
\end{proof}

\begin{remark}\label{3lu1}
Fix $(a,b)\in \NN^2\setminus \{0,0\}$ and a general $X\in Z(a,b)$.

\quad {\bf Claim 1:} $X$ has maximal rank with respect to $\Oo_{\PP^3}(3)$.

\quad {\bf Proof Claim 1:} If $a=0$ (resp. $a=1$), then Claim 1 is true by \cite{hh} (resp. \cite{a}). Assume $a\ge 2$. Since
$h^0(\Oo_X(3)) = 10a+4b$ and $\binom{6}{3} =20$, to prove Claim 1 it is sufficient to prove $h^0(\Ii_X(3)) =0$ if $a=2$ and
$b=0$. Assume $a=2$ and $b=0$. Fix two skew lines $L$ and $R$. Let $Q$ be a general quadric containing $L\cup R$. $Q$ is
smooth. Set $X:= 2L\cup 2R$. Since $h^0(Q,\Ii_{X\cap Q,Q}(3))=0$, $\Res _Q(X) = L\cup R$ and $h^0(\Ii _{L\cup R}(1))=0$, the
residual exact sequence of $Q$ gives $h^0(\Ii_X(3)) =0$.
\end{remark}

\begin{lemma}\label{3lu6}
Fix integers $a\ge 0$ and $d\ge 4$ such that $a(3d+1) \le \binom{d+3}{3}$ and $b_{a,d} \ge u_{0,d-2}-a$.
If $C_a(0,d-2)$ is true, then $E(a,d)$ is true.
\end{lemma}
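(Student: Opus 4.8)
The plan is to prove $E(a,d)$ by a Horace (residual) argument with respect to a fixed smooth quadric surface $Q\subset\PP^3$, which lowers the degree from $d$ to $d-2$ and lets us feed in the curve $Y$ supplied by $C_a(0,d-2)$. Write $Y=U\cup V\in W(0,u_{0,d-2}-2v_{0,d-2},v_{0,d-2})$, where (as part of $C_a(0,d-2)$, which also guarantees $u_{0,d-2}-2v_{0,d-2}-a\ge0$) exactly $a$ of the lines of $U$, say $L_1,\dots,L_a$, lie on $Q$.

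First I would construct a well-chosen flat limit $X_0\in Z'(a,b_{a,d})$ with $\Res_Q(X_0)=Y$. I thicken each $L_i$ to the double line $2L_i$; since $L_i\subset Q$ we have $\Res_Q(2L_i)=L_i$ and $2L_i\cap Q$ is the divisor $2L_i$ on $Q$. I keep the remaining $u_{0,d-2}-2v_{0,d-2}-a$ lines of $U$ transverse to $Q$, so each equals its own residual and meets $Q$ in two points. For each reducible conic $T_j$ of $V$, with vertex $o_j\in Q$, I take $T_j\cup 2o_j$, which by Remark \ref{cv04} is a flat limit of two skew lines and satisfies $\Res_Q(T_j\cup 2o_j)=T_j$, while $(T_j\cup 2o_j)\cap Q$ is the double point $(2o_j,Q)$ together with the two further points where the arms of $T_j$ meet $Q$. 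Finally I add $b_{a,d}-u_{0,d-2}+a$ general lines lying on $Q$ (empty residual, each contributing a ruling to the trace); the hypothesis $b_{a,d}\ge u_{0,d-2}-a$ makes this number non-negative, and counting components shows $X_0\in Z'(a,b_{a,d})$ with $\Res_Q(X_0)=Y$. Since $C_a(0,d-2)$ gives $h^0(\Ii_Y(d-2))=h^1(\Ii_Y(d-2))=0$, the residual exact sequence \eqref{eqc1} reduces everything to the trace, namely $h^1(\Ii_{X_0}(d))\le h^1(Q,\Ii_{X_0\cap Q,Q}(d))$.

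The heart of the proof is to show $h^1(Q,\Ii_{X_0\cap Q,Q}(d))=0$. The trace $X_0\cap Q$ splits into a divisorial part $D$ — the $a$ double rulings $2L_i$ and the $b_{a,d}-u_{0,d-2}+a$ single rulings from the lines on $Q$ — and a zero-dimensional part $Z$, consisting of the $v_{0,d-2}$ double points $(2o_j,Q)$ together with $2(u_{0,d-2}-2v_{0,d-2}-a)+2v_{0,d-2}$ general points, so that $\deg Z=2u_{0,d-2}+v_{0,d-2}-2a$. Using $\binom{d+3}{3}=\binom{d+1}{3}+(d+1)^2$ one checks $b_{a,d}-u_{0,d-2}=O(d)$, so the rulings making up $D$ can be distributed between the two rulings of $Q\cong\PP^1\times\PP^1$ with both bidegrees at most $d$. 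Peeling $D$ off identifies $h^1(Q,\Ii_{X_0\cap Q,Q}(d))$ with $h^1(Q,\Ii_{Z}((d,d)-D))$; taking $D$ in a single ruling, the identity \eqref{eqa3=} gives exactly $h^0(\Oo_Q((d,d)-D))-\deg Z=c_{a,d}\ge0$, and balancing the two rulings only enlarges $h^0$. Thus the numerology is consistent with the vanishing, and it remains to prove that the general such $Z$ imposes independent conditions on $|\Oo_Q((d,d)-D)|$, which I would deduce from the independence results of \cite{cm} recorded in Remark \ref{cv03}.

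The main obstacle is precisely this last step: the scheme $Z$ is not a general union of arrows but contains genuine double points $(2o_j,Q)$ of degree three, whereas Remark \ref{cv03} is phrased for arrows. I expect to handle this either by quoting the version of \cite{cm} for general unions of simple and double points of $Q$, or by specializing each $(2o_j,Q)$ to an arrow plus a free point and reducing to the arrow case; in both approaches one must verify that the slack $c_{a,d}\le d$ absorbs the loss of balance caused by splitting $D$ between the two rulings. Once $h^1(Q,\Ii_{X_0\cap Q,Q}(d))=0$ is established, the residual sequence yields $h^1(\Ii_{X_0}(d))=0$, and upper semicontinuity of $h^1$ over $Z(a,b_{a,d})$ upgrades this to the general member, proving $E(a,d)$.
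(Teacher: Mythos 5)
Your construction is exactly the paper's: the same flat limit $X_0=(\cup_{L\in U''}2L)\cup U'\cup V\cup G\cup E$ with $G=\cup_{o\in\mathrm{Sing}(V)}2o$ and $E$ a union of $b_{a,d}-u_{0,d-2}+a$ rulings, the same use of Remark \ref{cv04} to get $X_0\in Z'(a,b_{a,d})$, and the same reduction via the residual exact sequence of $Q$ to the vanishing of $h^1$ of the trace. (One cosmetic difference: the paper keeps all divisorial components in the single ruling $|\Oo_Q(1,0)|$, getting bidegree $(d-b_{a,d}+u_{0,d-2}-3a,\,d)$, and does not need to balance the two rulings; nonnegativity of the first entry follows from \eqref{eqa3=}.)

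The genuine gap is the step you yourself flag: proving that $Z=(G\cap Q)\cup(U'\cap Q)\cup(V\cap Q)$ imposes independent conditions, where $Z$ contains $v_{0,d-2}$ planar double points of $Q$. Neither of your proposed fixes works. First, \cite{cm} (Remark \ref{cv03}) applies only to curvilinear schemes, and a planar double point $(2o,Q)$ is not curvilinear, so there is no ``version of \cite{cm}'' to quote here. Second, an arrow plus a free point is a \emph{generization} of $(2o,Q)$, not a specialization: in the punctual Hilbert scheme of degree $3$ on a surface the planar double point lies in the closure of the curvilinear locus, so proving independence for arrows plus points says nothing, by semicontinuity, about the double points you actually have; and you cannot replace $2o_j$ by a smaller scheme inside $X_0$ without destroying the degeneration of Remark \ref{cv04}. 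The paper closes the gap differently: it first invokes \cite{laf} or \cite{v} (linear systems with fat points on $\PP^1\times\PP^1$) to get $h^1(Q,\Ii_{G\cap Q,Q}(d-b_{a,d}+u_{0,d-2}-3a,d))=0$, which is where the bound $v_{0,d-2}\le d/3$ from Remark \ref{3lu4} is used; it then deforms each reducible conic so that, with $G\cap Q$ fixed, all remaining points of $Z$ are general in $Q$, and adds them one at a time, the count $\deg(Z)=(d-b_{a,d}+u_{0,d-2}-3a+1)(d+1)-c_{a,d}$ with $c_{a,d}\ge 0$ from \eqref{eqa3=} guaranteeing that $h^1$ stays zero throughout. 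You should replace your last paragraph by this two-stage argument.
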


\begin{proof}
Take a solution $(Y,Q)$ of $C_a(0,d-2)$ with $Y=U\cup V$ and $U =U'\cup U''$ with $U''\subset Q$, say with $U''\in |\Oo_Q(a,0)|$, and $V$ the union of the conics
with $\mathrm{Sing}(V)\subset Q$. We may assume that  $\mathrm{Sing}(V)$ is a general union of $v_{0,d-2}$ points of $Q$.
Set $G:=
\cup _{p\in
\mathrm{Sing}(V)}2p$. Let $E\subset Q$ be the union of $b_{a,d} - u_{0,d-2}+a$ general elements of $|\Oo_Q(1,0)|$. Set $X
=(\cup _{L\in U''}2L) \cup U'\cup V\cup G \cup E$. By Remark \ref{cv04} $X\in Z'(a,b_{a,d})$. By the semicontinuity theorem for
cohomology it is sufficient to prove $h^1(\Ii _X(d))=0$. Since $\Res_Q(X)=Y$ and $h^1(\Ii_Y(d-2)) =0$, the residual exact
sequence of $Q$ shows that it is sufficient to prove $h^1(Q,\Ii_{X\cap Q,Q}(d)) =0$, i.e. 
$$h^1(Q,\Ii _{(G\cap
Q)\cup (U'\cap Q)\cup (V\cap Q),Q}(d-b_{a,d}+u_{0,d-2}-3a,d)) =0.$$Set $Z:= (G\cap Q)\cup (U'\cap Q)\cup (V\cap Q)$. We only need to prove that $Z$ imposes independent conditions to the linear system $|\Oo_Q(d-b_{a,d}+u_{0,d-2}-3a,d)|$. The scheme $G\cap Q$ is a general union of
$v_{0,d-2}$  double points of $Q$ with $v_{0,d-2} \le d/3$ (Remark \ref{3lu4}). Thus $h^1(Q,\Ii_{Q\cap
G,Q}(d-b_{a,d}+u_{0,d-2}-3a,d))=0$ (one can use
\cite{laf,v}). Fix
$p\in \mathrm{Sing}(V)$ and let $D$ be the connected component of $V$ containing $p$. We may deform $D$ to a general union of
$2$ lines through $p$. Thus we may assume that $D\cap (Q\setminus \{p\})$ are two general points of $Q$. Thus we may fix
$G\cap Q$ and get that the other points of $Y\cap Q$ are general in $Q$.  By \eqref{eqa3=} $\deg (Z) =(d-b_{a,d}+u_{0,d-2}-3a+1)(d+1) -c_{a,d}$. Since  $c_{a,d}\ge 0$, we get $h^1(Q,\Ii _{(G\cap
Q)\cup (U'\cap Q)\cup (V\cap Q),Q}(d-b_{a,d}+u_{0,d-2}-3a,d)) =0$. 
\end{proof}

\begin{lemma}\label{3lu7}
Fix integers $a\ge 0$ and $d\ge 4$ such that $a(3d-2) \le \binom{d+2}{3}$ and $b_{a,d-1}+1 \ge u_{0,d-3}-a$.
If $C_a(0,d-3)$ is true, then $F(a,d)$ is true.
\end{lemma}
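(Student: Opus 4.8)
The plan is to follow the proof of Lemma \ref{3lu6} almost verbatim, working in degree $d-1$ instead of $d$, using $C_a(0,d-3)$ in place of $C_a(0,d-2)$, adding one extra line, and aiming at $h^0=0$ rather than $h^1=0$. First I would take a solution $(Y,Q)$ of $C_a(0,d-3)$ with $Y=U\cup V$, $U=U'\cup U''$, $U''\subset Q$ and $U''\in|\Oo_Q(a,0)|$, and with $\mathrm{Sing}(V)$ a general set of $v_{0,d-3}$ points of $Q$; set $G:=\cup_{p\in\mathrm{Sing}(V)}2p$. Let $E\subset Q$ be a union of $b_{a,d-1}+1-u_{0,d-3}+a$ general elements of $|\Oo_Q(1,0)|$, a non-negative number by the hypothesis $b_{a,d-1}+1\ge u_{0,d-3}-a$, and put $X=(\cup_{L\in U''}2L)\cup U'\cup V\cup G\cup E$. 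By Remark \ref{cv04}, $X\in Z'(a,b_{a,d-1}+1)$ (compared with the construction for $E(a,d)$, the single extra component is exactly one additional line of $E$), so by semicontinuity it suffices to prove $h^0(\Ii_X(d-1))=0$.

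Next I would pass to $Q$. One checks $\Res_Q(X)=Y$, and $h^0(\Ii_Y(d-3))=0$ holds by $C_a(0,d-3)$; hence by Remark \ref{c0} it is enough to prove $h^0(Q,\Ii_{X\cap Q,Q}(d-1))=0$. The divisorial part of $X\cap Q$, namely $(\cup_{L\in U''}2L)\cap Q$ together with $E$, lies in $|\Oo_Q(2a+b_{a,d-1}+1-u_{0,d-3}+a,0)|$; removing it reduces the task to $h^0(Q,\Ii_{Z,Q}(\alpha,d-1))=0$, where $Z:=(G\cap Q)\cup(U'\cap Q)\cup(V\cap Q)$ and $\alpha:=d-2-b_{a,d-1}+u_{0,d-3}-3a$. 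If $\alpha<0$ this holds trivially, so assume $\alpha\ge 0$. As in Lemma \ref{3lu6}, deforming each conic of $V$ to a pair of lines through its node lets me take $G\cap Q$ to be $v_{0,d-3}$ general double points of $Q$ and the remaining points of $Z$ to be general in $Q$, independent of $G\cap Q$.

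The key numerical input is the degree of $Z$. Subtracting \eqref{eqa2=} for $(0,d-3)$ from \eqref{eqa1=} for $(a,d-1)$ --- the analogue of \eqref{eqa3=} --- gives $\deg Z=2u_{0,d-3}+v_{0,d-3}-2a$ and $h^0(\Oo_Q(\alpha,d-1))=2u_{0,d-3}+v_{0,d-3}-2a-d+c_{a,d-1}$, so that
$$\deg Z=h^0(\Oo_Q(\alpha,d-1))+(d-c_{a,d-1}).$$
Since $0\le c_{a,d-1}\le d-1$ by \eqref{eqa1=}, the scheme $Z$ has at least one more point than $h^0(\Oo_Q(\alpha,d-1))$. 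Writing $Z=(G\cap Q)\cup P$ with $P$ the set of $|P|=2(u_{0,d-3}-v_{0,d-3}-a)$ general points, and using that general points impose independent conditions, we get $h^0(Q,\Ii_{Z,Q}(\alpha,d-1))=\max\{0,\,h^0(Q,\Ii_{G\cap Q,Q}(\alpha,d-1))-|P|\}$. Thus it suffices to prove $|P|\ge h^0(Q,\Ii_{G\cap Q,Q}(\alpha,d-1))$, which by the displayed identity is equivalent to $h^1(Q,\Ii_{G\cap Q,Q}(\alpha,d-1))\le d-c_{a,d-1}$.

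The hard part will be this last inequality, i.e. controlling the cohomology of the $v_{0,d-3}$ general double points $G\cap Q$ on $Q\cong\PP^1\times\PP^1$ in bidegree $(\alpha,d-1)$. I would invoke the postulation of general double points on a smooth quadric (\cite{laf,v}, exactly as in Lemma \ref{3lu6}), feed in the explicit value of $v_{0,d-3}$ from Remark \ref{3lu4} and the inequality $u_{0,d-3}\ge 2v_{0,d-3}+a$ contained in $C_a(0,d-3)$, and check that $h^1(Q,\Ii_{G\cap Q,Q}(\alpha,d-1))$ never exceeds $d-c_{a,d-1}\ge 1$. This is precisely the step that must collapse for the genuinely exceptional data (such as $F(2,5)$, which governs $X_{2,2}$ in degree $4$): there the hypothesis $C_a(0,d-3)$ itself fails --- e.g. $C_2(0,2)$ fails because $u_{0,2}=4<2v_{0,2}+2=6$ --- so no contradiction with the exceptional behaviour arises.
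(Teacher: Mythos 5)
Your proposal is correct and follows essentially the same route as the paper: the same degeneration $X=(\cup_{L\in U''}2L)\cup U'\cup V\cup G\cup E\in Z'(a,b_{a,d-1}+1)$, the same residual exact sequence with respect to $Q$, and the same degree count $\deg (Z)=h^0(\Oo_Q(\alpha,d-1))+d-c_{a,d-1}$ obtained from the analogue of \eqref{eqa3=} in degree $d-1$. The only (harmless) difference is at the very end, where the paper asserts $h^1(Q,\Ii_{G\cap Q,Q}(\alpha,d-1))=0$ for the $v_{0,d-3}$ general double points and then says ``continue as in the proof of Lemma \ref{3lu6}'', while you reduce to the weaker bound $h^1\le d-c_{a,d-1}$ before invoking \cite{laf,v}.
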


\begin{proof}
Take a solution $(Y,Q)$ of $C_a(0,d-3)$ with $Y=U\cup V$ and $U =U'\cup U''$, $U''\subset Q$, say with $U''\in
|\Oo_Q(a,0)|$, and $V$ the union of the conics with $\mathrm{Sing}(V)\subset Q$. We may assume that  $\mathrm{Sing}(V)$ is a
general union of $v_{0,d-3}$ points of $Q$.
Set $G:=
\cup _{p\in
\mathrm{Sing}(V)}2p$. Let $E\subset Q$ be the union of $b_{a,d-1}+1 - u_{0,d-3}+a$ general elements of $|\Oo_Q(1,0)|$. Set $X
=(\cup _{L\in U''}2L) \cup U'\cup V\cup G \cup E$. By Remark  \ref{cv04} $X\in Z'(a,b_{a,d-1})$. By the semicontinuity theorem
for cohomology it is sufficient to prove $h^0(\Ii _X(d-1))=0$. Since $\Res_Q(X)=Y$ and $h^0(\Ii_Y(d-3)) =0$, to conclude the
proof it is sufficient to prove $h^0(Q,\Ii_{X\cap Q,Q}(d-1)) =0$, i.e. $$h^0(Q,\Ii _{(G\cap Q)\cup (U'\cap Q)\cup (V\cap
Q),Q}(d-1-(b_{a,d-1}+1-u_{0,d-3}+3a),d-1)) =0.$$ Set $Z:= (G\cap Q)\cup (U'\cap Q)\cup (V\cap Q)$. Since $c_{a,d-3}\le
d$, We only need to prove  $h^0(\Ii_Z(d-1-(b_{a,d-1}+1-u_{0,d-3}+3a),d-1))=0$.   By \eqref{eqa3=} for the integer $d-1$ we have $\deg (Z) =(d-1-b_{a,d-1}+u_{0,d-3}-3a)d +d-c_{a,d-3}$. Recall that $c_{a,d-3}\le d$. The scheme $G\cap
Q$ is a general union of
$v_{0,d-3}$ double points of $Q$ and $v_{0,d-3} \le d/3$. We continue as in the proof of Lemma \ref{3lu6}.
\end{proof}

\begin{lemma}\label{n3lu1}
We have $a(3d+1) \le \binom{d+3}{3}$ and $b_{a,d}\ge u_{0,d-2}-a$ for all $d> 3a>0$.\end{lemma}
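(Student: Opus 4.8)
The plan is to prove both inequalities by elementary estimates, using only the defining relations \eqref{eqa1=}, \eqref{eqa2=}, \eqref{eqa3=} together with the bounds $0\le c_{a,d}\le d$ and $0\le v_{0,d-2}\le d-2$, and the fact that the hypothesis $d>3a>0$ gives $a\ge 1$ and $3a\le d-1$, i.e. $a\le (d-1)/3$. No geometry enters; the whole statement is a numerical lemma whose role is to verify the two hypotheses of Lemma \ref{3lu6}.

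For the first inequality $a(3d+1)\le\binom{d+3}{3}$ (which is exactly what guarantees $b_{a,d}\ge 0$), I would bound $a$ by $(d-1)/3$ and reduce to the degree-$3$ polynomial inequality $2(d-1)(3d+1)\le(d+1)(d+2)(d+3)$. Expanding both sides, this becomes $0\le d^3+15d+8$, which holds for every $d\ge 0$; hence the first inequality follows for all admissible $(a,d)$.

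For the second inequality $b_{a,d}\ge u_{0,d-2}-a$, the key move is to rewrite it through \eqref{eqa3=}. Multiplying the desired inequality by $d+1>0$ and substituting the value of $(d+1)(b_{a,d}-u_{0,d-2})$ read off from \eqref{eqa3=} (so that the term $a(3d+1)$ combines with $a(d+1)$ to leave $-2ad$), the claim becomes equivalent to
$$(d+1)^2\ge 2ad+2u_{0,d-2}+c_{a,d}+v_{0,d-2}.$$
Here I would control each summand on the right: $2ad\le \tfrac{2d(d-1)}{3}$ from the hypothesis; $c_{a,d}\le d$ and $v_{0,d-2}\le d-2$ from \eqref{eqa1=} and \eqref{eqa2=}; and, using $\binom{d+1}{3}=\tfrac{(d+1)d(d-1)}{6}$ in the defining relation $(d-1)u_{0,d-2}-v_{0,d-2}=\binom{d+1}{3}$, the estimate $2u_{0,d-2}=\tfrac{d(d+1)}{3}+\tfrac{2v_{0,d-2}}{d-1}<\tfrac{d(d+1)}{3}+2$. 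Adding these bounds puts the right-hand side strictly below $d^2+\tfrac{5d}{3}$, which is in turn $<(d+1)^2$ since $\tfrac{5d}{3}<2d+1$; this proves the required inequality with room to spare.

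The computations are routine; the only real obstacle is the bookkeeping, namely eliminating $b_{a,d}$ and $u_{0,d-2}$ in favour of the bounded residues $c_{a,d}$ and $v_{0,d-2}$ via \eqref{eqa3=}, and then noting that the hypothesis is used precisely once, in the bound $a\le(d-1)/3$ that controls the $2ad$ term. I expect that no sharper input (such as the exact values in Remark \ref{3lu4}) is needed, since the final margin $(d+1)^2-(d^2+\tfrac{5d}{3})=\tfrac{d}{3}+1$ is comfortably positive.
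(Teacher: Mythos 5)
Your proposal is correct, and it follows the same basic strategy as the paper: both arguments use only the hypothesis in the form $a\le (d-1)/3$, and both reduce the second inequality to a bound coming from \eqref{eqa3=} together with $c_{a,d}\le d$. The differences are in execution. For the first inequality the paper observes that $\phi_a(x)=\binom{x+3}{3}-a(3x+1)$ is increasing for $x\ge 3a+1$ and checks $\phi_a(3a+1)\ge 0$, whereas you substitute $a\le(d-1)/3$ and expand to $0\le d^3+15d+8$; both are one-line computations. For the second inequality the paper argues by contradiction and then splits into cases according to $d\bmod 3$, plugging in the exact values of $u_{0,d-2}$ and $v_{0,d-2}$ from Remark \ref{3lu4} to get the polynomial inequalities \eqref{eqn2} and \eqref{eqn3}, which it then disproves. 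You instead prove the inequality directly and avoid the case split entirely by extracting the uniform estimate $2u_{0,d-2}=\tfrac{d(d+1)}{3}+\tfrac{2v_{0,d-2}}{d-1}<\tfrac{d(d+1)}{3}+2$ straight from the defining relation $(d-1)u_{0,d-2}-v_{0,d-2}=\binom{d+1}{3}$; combined with $2ad\le\tfrac{2d(d-1)}{3}$, $c_{a,d}\le d$ and $v_{0,d-2}\le d-2$ this bounds the right-hand side by $d^2+\tfrac{5d}{3}<(d+1)^2$, which I have checked. Your version is slightly cleaner and makes the margin explicit, while the paper's version displays the exact arithmetic of $u_{0,d-2}$ and $v_{0,d-2}$ that is reused elsewhere; either proof is acceptable.
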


\begin{proof}
Call $\phi_a(d)$ the difference between the right hand side and the left hand side of the first inequality. Since for a fixed
$a\in \NN$ the function $\phi_a(x)$ is increasing for $x\ge 3a+1$, to prove the first inequality it is sufficient to observe
that $\phi_a(3a+1)\ge 0$.

 Assume that the second inequality fails, i.e. assume $b_{a,d}\le u_{0,d-2}-a-1$. Since $c_{a,d} \le d$,
\eqref{eqa3=} gives
\begin{equation}\label{eqn1}
a(3d+1) +2u_{0,d-2}-(d+1)(a+1) +d +v_{0,d-2} \ge (d+1)^2
\end{equation}

Assume for the moment $d\equiv 0,2\pmod{3}$  so that $v_{0,d-2} =0$ and $2u_{0,d-2} = (d+1)d/3$ (Remark \ref{3lu4}). In this case
\eqref{eqn1} gives
\begin{equation}\label{eqn2}
a(2d+1) \ge (2d^2+5d+7)/3
\end{equation}
For all $d\ge 3a$  \eqref{eqn2} is false. Now assume $d\equiv 1\pmod{3}$, say $d=3k+1$ with $k\in \NN$, so that $v_{0,d-2} =2k = 2(d-1)/3$
and $2u_{0,d-2} = 3k^2+3k+2 = (d+1)d/3 + 2/3$. Thus it is sufficient to disprove the inequality
\begin{equation}\label{eqn3}
a(2d+1) \ge (2d^2+3d+7)/3
\end{equation}
In this case it is sufficient to assume $d\ge 3a+1$.\end{proof}

\begin{lemma}\label{3lu3}
$C_e(0,3k-1)$ is true for all $k \ge 4$ and all $e\le 2k-5$.
\end{lemma}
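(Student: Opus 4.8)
The plan is to produce one curve $Y=U\cup V$ in $W(0,u_{0,d}-2v_{0,d},v_{0,d})$, with $d=3k-1$ and exactly $e$ lines of $U$ lying on $Q$, for which $h^1(\Ii_Y(d))=0$. By \eqref{eqa2=} every member of this family satisfies $h^0(\Ii_Y(d))=h^1(\Ii_Y(d))$, so it is equivalent, and I will aim, to show $h^0(\Ii_Y(d))=0$; by semicontinuity it then suffices to exhibit a single well-chosen $Y$ (or a flat limit of such). Before the cohomology I would dispose of the combinatorial half of the assertion: Remark \ref{3lu4} gives $v_{0,d}=2k$ and $u_{0,d}=(3k^2+3k+2)/2$, so the inequality $u_{0,d}\ge 2v_{0,d}+e$ reduces to $k^2-3k+4\ge 0$, which holds for every $k$. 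Thus the number of lines is $\nu:=(3k-2)(k-1)/2$ and the number of conics is $\mu:=2k$, and only the vanishing $h^0(\Ii_Y(d))=0$ remains.

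The engine is the residual exact sequence \eqref{eqc1} of Remark \ref{c0} with respect to $Q$, which bounds $h^0(\Ii_Y(d))$ by $h^0(\Ii_{\Res_Q(Y)}(d-2))+h^0(Q,\Ii_{Y\cap Q,Q}(d))$; hence it suffices to arrange that \emph{both} summands vanish. I would specialize the $e$ distinguished lines into one ruling of $Q$ and specialize a suitable number $s$ of the conics to lie entirely on $Q$ (as $(1,1)$-curves), while leaving the other $\nu-e$ lines general in $\PP^3$ and the other $\mu-s$ conics transverse to $Q$ with general vertices on $Q$. Then $\Res_Q(Y)$ is a general union of $\nu-e$ lines and $\mu-s$ conics in degree $d-2=3(k-1)$, whereas the trace $Y\cap Q$ splits as the divisor $\Oo_Q(e+s,s)$ (the $e$ rulings and the $s$ conics-on-$Q$) together with a zero-dimensional scheme supported at general \emph{simple} points of $Q$ (two per line off $Q$ and two per transverse conic) and at $\mu-s$ general \emph{arrows} in $Q$, one at each vertex of a transverse conic.

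For the trace I would read $h^0(Q,\Ii_{Y\cap Q,Q}(d))=0$ as an overfilling problem on $Q\cong\PP^1\times\PP^1$ for the twisted bundle $\Oo_Q(d-e-s,\,d-s)$: the simple points impose one condition each and the arrows two conditions each, and their expected effect on the linear system is exactly what Remark \ref{cv03} delivers through \cite{cm} (the arrow estimate $\dim W(-G)=\max\{0,\dim W-2e\}$). Here the hypothesis $e\le 2k-5$ is what leaves enough room on $Q$ for the remaining points and arrows to overfill the twisted system. For the residual, a general union of $\nu-e$ lines and $\mu-s$ conics in degree $3(k-1)$, I would prove $h^0=0$ by induction on $k$, with the base case $k=4$ checked by hand, using the flat degeneration of Remark \ref{cv04} to trade conics (together with the double points of their vertices) for pairs of lines and then the maximal rank of general unions of lines of Hartshorne--Hirschowitz \cite{hh} as the ultimate input.

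The main obstacle is the simultaneous overfilling of the two sides. Because the quadric trace systematically carries far fewer conditions than $h^0(\Oo_Q(d,d))$, while the residual carries more than $\binom{d+1}{3}$, there is essentially no slack: the integer $s$, namely how many conics to push onto $Q$, must be tuned so that the residual still overfills $\PP^3$ in degree $d-2$ \emph{and} the trace still overfills $Q$ after twisting down by $\Oo_Q(e+s,s)$. Keeping both counts correct through the induction is precisely what the bookkeeping identity \eqref{eqa3=}, the bound $v_{0,d-2}\le d/3$ from Remark \ref{3lu4}, and the constraint $e\le 2k-5$ are there to guarantee; verifying that such an $s$ always exists, and that the two postulation problems are genuinely non-special for the general member, is where I expect the real work to lie.
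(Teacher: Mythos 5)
Your reduction of the combinatorial half (the inequality $u_{0,3k-1}\ge 2v_{0,3k-1}+e$ reduces to $k^2-3k+4\ge 0$) is fine, and your description of the residual and trace of the proposed specialization is accurate. But the approach has a fatal numerical obstruction, and it is exactly the point you defer to ``the real work'' in your last paragraph: \emph{no admissible value of $s$ exists}. Since $h^0(\Oo_Y(d))=\binom{d+3}{3}$ exactly for $Y\in W(0,u_{0,d}-2v_{0,d},v_{0,d})$ with $d=3k-1$, and since one application of the residual sequence \eqref{eqc1} distributes these conditions without loss or double counting between $\Res_Q(Y)$ in degree $d-2$ and $Y\cap Q$ in $|\Oo_Q(d,d)|$ (check: a line off $Q$ gives $(3k-2)+2=3k$, a transverse conic gives $(6k-5)+4=6k-1$, and the components on $Q$ contribute only divisorially to the trace), demanding that \emph{both} $h^0(\Ii_{\Res_Q(Y)}(d-2))$ and $h^0(Q,\Ii_{Y\cap Q,Q}(d))$ vanish forces both sides to be \emph{exactly} balanced, because a scheme imposing fewer conditions than the dimension of the ambient system can never have $h^0=0$. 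Concretely, with $\nu=(3k-2)(k-1)/2$ and $e=2k-5$: the residual, a general union of $\nu-e$ lines and $2k-s$ conics, satisfies the necessary inequality $h^0(\Oo_{\Res_Q(Y)}(3k-3))\ge\binom{3k}{3}$ if and only if $s(6k-5)\le 16k-12$, which for $k\ge 2$ means $s\le 2$; while the zero-dimensional part $Z$ of the trace, of degree $3k^2-k+12-4s$, overfills $|\Oo_Q(k+4-s,3k-1-s)|$ if and only if $s(4k+1-s)\ge 16k-12$, which means $s\ge 4$. These are incompatible (exact balance on both sides would force $4(6k-5)=16k-12$, i.e.\ $k=1$), so for every choice of $s$ one of the two summands in your bound is strictly positive and the argument collapses.

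This is why the paper never takes the residual with respect to $Q$ itself. It introduces a \emph{second} general quadric $Q'$ and builds the solution in two steps, from degree $3k-5$ to $3k-3$ to $3k-1$, each time adding lines contained in $Q'$ and applying the residual sequence with respect to $Q'$; the $2k$ singular points of the conics are created on the curve $D=Q\cap Q'$ (so they lie on $Q$ as $C_e(0,3k-1)$ requires) by pairing new lines in $Q'$ with points of the previously constructed curve on $D$, and the $e=2k-5$ distinguished lines, inherited from a solution of $C_{2k-5}(0,3k-5)$ supplied by Lemma \ref{c8}, remain on $Q$ throughout. To repair your argument you would need some such second surface (or a differential Horace-type step) to absorb the excess of $16k-12$ conditions that a single residual computation with respect to $Q$ cannot distribute.
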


\begin{proof}
 It is sufficient to do the case $e=2k-5$. Recall that $v_{0,3k-5} =v_{0,3k-3}= 0$, $v_{0,3k-1} = 2k$, $u_{0,3k-5} = (3k-2)(k-1)/2$, $u_{0,3k-3} =k(3k-1)/2$, $u_{0,3k-1} =(3k^2+3k+2)/2$ and hence $u_{0,3k-1}-u_{0,3k-3} =2k+1$ and $u_{0,3k-3}-u_{0,3k-5} =2k-1$. Take $(Y,Q)$ satisfying $C_e(0,3k-5)$ (Lemma \ref{c8}), say $Y = U'\sqcup U''$ with $\deg (U'') =2k-5$ and $U''\in |\Oo_Q(2k-5,0)|$. Take a general quadric $Q'\subset \PP^3$
and set $D:= Q\cap Q'$. For a general $(Q,Q')$ the scheme $D$ may be seen as a general element of $|\Oo_Q(2,2)|$ (and hence it is a linearly normal degree $4$ elliptic curve) and a general element of $|\Oo_{Q'}(2,2)|$. Let $E \subset Q'$ be a general union of $2k-1$ element of $|\Oo_{Q'}(1,0)|$. Set $Y':= Y\cup E\in Z(0,3k-3)$.

\quad {\bf Claim 1:} $h^i(\Ii_{Y'}(3k-3)) =0$, $i=0,1$.

\quad {\bf Proof of Claim 1:} Since $\Res_{Q'}(Y') =Y$ and $h^i(\Ii_Y(3k-5)) =0$, $i=0,1$, the residual exact sequence of $Q'$ shows that to prove Claim 1 it is sufficient to prove $h^i(Q',\Ii_{Q'\cap
Y',Q'}(3k-3))=0$, $i=0,1$. The scheme $Q\cap Y'$ is the union of $3$ algebraic sets:  $U'\cap Q'$, $E$ and  $U''\cap Q'$. The set $U''\cap Q'$ is formed by $2e$ points
of the elliptic curve $D\in |\Oo_{Q'}(2,2)|$.  Since we may deform $U''$ to general lines fixing $U'$, $Q$ and $Q'$, the scheme $U''\cap
Q'$ is formed by $2\deg (U'')$ general points of $Q'$, concluding the proof of Claim 1.

 Take a general
$D\in |\Oo _Q(2,2)|$. For a general
$U''$ each connected component of $U''$ meets $D$ at two points. Since $D$ is an elliptic curve and $\deg (\Oo_D(k-2,3k-3)) =
\Oo _{Q'}(2,2)\cdot \Oo_{Q'}(k-2,3k-3) =8k-10 >4k-10$,
$D\cap U''$ gives independent conditions to $H^0(\Oo_D(k-2,3k-3)$. Since $k\ge 4$, $h^1(\Oo_{Q'}(k-4,3k-5)) =0$ and hence the
exact sequence
$$0 \to \Oo_{Q'}(k-4,3k-5)\to \Oo_{Q'}(k-2,3k-3)\to \Oo_D(k-2,3k-3)\to 0$$gives the surjectivity of the restriction map $H^0(\Oo_{Q'}(k-2,3k-3))\to H^0(\Oo_D(k-2,3k-3))$. Thus
$h^1(Q',\Ii_{U''\cap Q',Q'}(k-2,3k-3))=0$. Since we may move $U'$ to general lines after fixing $U''$, $Q$ and $Q'$, the scheme $U''\cap Q'$ is a general subset of $Q'$, concluding the proof of Claim 1.

 We need a small modification of the construction just done. Since a general $p\in Q$ is contained in another quadric, we may
assume that $D$ contains a general $p\in Q$
 and hence without losing the condition $h^i(\Ii_Y(3k-5))=0$, $i=0,1$, we may assume that one of the point of $U'\cap Q$ is contained in $D$. Even with this additional
assumption the proof of Claim 1 works, because $\deg (\Oo_D(k-2,3k-5)) =8k-10 >4k-9$. Thus we may assume that $U'\cap Q$
contains exactly one point of $D$. 
 We deform $E$ to lines $E_1\nsubseteq Q'$, keeping fixed for each connected component $L$ of $E$ one of the two points of
$L\cap D$. Thus $Y_1:= Y\cup E_1$ contains $2k$ general points of $D$ while the other points of $Y_1\cap Q$ are general in $Q$.
 Let $F\subset Q'$ be the union of $2k+1$ elements of $|\Oo_{Q'}(1,0)|$, $2k$ of them containing a point of $Y_1\cap D$. As in the proof of Claim 1 we
 get $h^i(\Ii_{Y_1\cup F}(3k-1)) =0$, $i=0,1$. $Y_1\cup F$ satisfies $C_{2k-1}(0,3k-1)$, because all singular points of it are contained in $D\subset Q$ and $U''\subset Q$.\end{proof}

\begin{lemma}\label{c8}
Fix an integer $k\ge 2$. 

\quad (a) $C_e(0,3k)$ is true for all $e\le 2k+1$.

\quad (b) $C_e(0,3k+1)$ is true for all $e\le 2k+1$.

\quad ({c}) $C_e(0,3k-1)$ is true for all $e\le 1$.
\end{lemma}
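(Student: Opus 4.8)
The plan is to prove the three parts by direct constructions based on the residual exact sequence of Remark~\ref{c0} relative to the fixed smooth quadric $Q$, which lowers the degree by two and splits the problem into a residual piece in $\PP^3$ at degree $d-2$ and a trace piece on $Q$. Since every scheme occurring here is balanced, $h^0(\Ii_Y(d))=h^1(\Ii_Y(d))$ (the remark following the definition of $C(a,d)$), so it suffices to prove $h^1=0$; and in each part it is enough to treat the largest admissible $e$ (namely $2k+1$ in (a) and (b), and $1$ in (c)), because the smaller values follow by sliding one on-$Q$ line off $Q$ and invoking upper semicontinuity of $h^1$. Throughout I normalize by putting every component that is forced onto $Q$ into one and the same ruling, so these components stay pairwise disjoint and cut out a divisor of type $\Oo_Q(0,m)$; the off-$Q$ lines then meet $Q$ in pairs of general points, so the trace becomes the corresponding twist of the ideal of those points.

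Parts (a) and (c) are the self-contained cases. For (a) I take $Y$ to be $u_{0,3k}$ lines with $2k+1$ of them in one ruling of $Q$. Then $\Res_Q(Y)$ is a general union of $u_{0,3k}-(2k+1)=u_{0,3k-2}$ lines at degree $3k-2$, a balanced system, so $h^i(\Ii_{\Res_Q(Y)}(3k-2))=0$ by \cite{hh}. Removing $\Oo_Q(0,2k+1)$ from the trace leaves $\Oo_Q(3k,k-1)$ against the $u_{0,3k}-(2k+1)$ pairs of points, again balanced, and this vanishes because general points of $Q$ have maximal rank (\cite{cm}, or \cite{laf,v}); the two genericity requirements are compatible since sending an off-$Q$ line to its pair of points of $Q$ is dominant. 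For (c) I give each of the $2k$ conics one branch in the chosen ruling (so its node lies on $Q$ automatically) and put the single on-$Q$ line in the same ruling; now $\Res_Q(Y)$ is the $2k$ off-branches together with the off-$Q$ lines, in all $u_{0,3k-3}$ general lines, balanced, hence again of maximal rank by \cite{hh}, while the trace, after removing $\Oo_Q(0,2k+1)$, is $\Oo_Q(3k-1,k-2)$ against a balanced set of points, finished by \cite{cm}. The numerology of $u_{0,\bullet}$ and $v_{0,\bullet}$ needed for these counts is Remark~\ref{3lu4}.

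Part (b) is the genuinely hard case, because the counts refuse to balance on the nose: with $2k+1$ lines on $Q$ the residual union of lines is over-determined by $2k$ while the trace is under-determined by the same $2k$. I would repair this by a differential Horace device. Start from $Y\in Z(0,u_{0,3k+1})$ with $2k+1$ lines in one ruling of $Q$, and degenerate $2k$ disjoint pairs of the off-$Q$ lines to $2k$ reducible conics with node on $Q$ via Remark~\ref{cv04}; by semicontinuity it suffices to treat this flat limit. A direct count gives $\Res_Q$ of the limit equal to $u_{0,3k+1}-(2k+1)-4k=(3k-2)(k-1)/2$ lines together with $2k$ conics, i.e.\ a general member of $W(0,(3k-2)(k-1)/2,2k)$ — exactly the configuration of part (c) at the same $k$. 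Hence the residual cohomology vanishes by part (c) (maximal rank of the special member there forces it on the general member by semicontinuity). The surplus that blocked the trace is now carried by the embedded points of the degenerate conics and by the arrows they cut on $Q$, so that the trace becomes a balanced union of points and arrows of $Q$, whose maximal rank is precisely what Remark~\ref{cv03} (via \cite{cm}) provides. The base of all of this bottoms out, at $k=2$, in configurations of lines of degrees $3$ and $4$, covered by \cite{hh}.

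The hard part will be this last step of part (b): arranging the degeneration of Remark~\ref{cv04} so that $\Res_Q$ is \emph{exactly} a $C(0,3k-1)$-configuration (i.e.\ that the embedded points of the limit conics are pushed entirely into the trace and do not contaminate the residual), and simultaneously controlling the position of the resulting points and arrows on $Q$ well enough that the mixed point-and-arrow scheme has maximal rank for the relevant $\Oo_Q(\alpha,\beta)$. Matching the length bookkeeping so that residual and trace are balanced at once, and verifying that Remark~\ref{cv03} applies to that mixed scheme, is the delicate obstacle; by contrast parts (a) and (c) only invoke the maximal rank of general lines in $\PP^3$ and of general points of $Q$.
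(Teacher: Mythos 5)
Your parts (a) and (c) reproduce the paper's constructions (Claims 2 and 4 of its proof of Lemma \ref{c8}) essentially verbatim, and your architecture for part (b) is also the paper's: degenerate pairs of skew lines into reducible conics with an embedded double point at a node lying on $Q$ (Remark \ref{cv04}), so that the residual with respect to $Q$ is exactly a $C(0,3k-1)$-configuration $U\cup V$. (The paper quotes $H'_{3k-1}$ from \cite[Proposition 2.2]{hh} for the vanishing of the residual, where you quote your part (c); both are legitimate and non-circular, since part (c) only needs $C(0,3k-3)$ from \cite{hh}.)

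The gap is in the step you yourself flag as delicate: the trace on $Q$ of the limit scheme in part (b). Each degenerate component is $T\cup 2o$ with $2o=(2o,\PP^3)$ the full degree-$4$ double point; its schematic intersection with $Q$ is two reduced points together with the \emph{planar} double point $(2o,Q)$, which has degree $3$ and embedding dimension $2$ --- it is not an arrow and not curvilinear. Consequently Remark \ref{cv03} and \cite{cm} do not apply to it, and your appeal to them at exactly this point fails. What is actually needed is $h^1(Q,\Ii_{A,Q}(k,3k+1))=0$ for $A$ a general union of $2k$ double points of $Q$, which the paper obtains from \cite{laf} or \cite{v} (maximal rank of general $2$-fat points on $\PP^1\times\PP^1$). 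With that substitution the bookkeeping closes: each degenerate component contributes degree $2+3=5$ to the trace, giving total degree $3k^2-k+2+6k=(k+1)(3k+2)=h^0(\Oo_Q(k,3k+1))$, and your part (b) becomes the paper's Claim 3. (You also omit the purely numerical check $u_{0,d}\ge 2v_{0,d}+e$ required by the definition of $C_e(0,d)$, but that is immediate from Remark \ref{3lu4}.)
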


\begin{proof}
We first check the numerical inequality in $C_e(0,d)$.

\quad {\bf Claim 1:} $u_{0,d} \ge 2v_{0,d} +e$ if $d=3k$ and $e\le 2k+1$ (resp. $d=3k+1$ and $e\le 2k+1$, resp. $d=3k-1$ and $e\le 1$).

\quad {\bf Proof of Claim 1:} Recall (Remark \ref{3lu4}) that $v_{0,3k} =v_{0,3k+1}=0$, $u_{0,3k}=(3k+2)(k+1)/2 \ge 2k+1$, $u_{0,3k+1}=(k+1)(3k+4)/2 \ge 2k+1$, $v_{0,3k-1}=2k$ and $u_{0,3k-1}=
(3k^2+3k+2)/2 =(k-1)(3k-2)/2 +2v_{0,3k-1}\ge 1+2v_{0,3k-1}$.

\quad {\bf Observation 1:} Since $v_{0,3k} =v_{0,3k+1}=0$, the main result of \cite{hh}, i.e. that general unions of a prescribed number of lines have maximal rank, gives $C(0,3k)$ and $C(0,3k+1)$
for all $k>0$.

In \cite[p. 177]{hh} R. Hartshorne and A. Hirschowitz defined the following Assertion $H'_{3k-1}$, $k\ge 2$:

\quad {\bf Assertion $H'_{3k-1}$, $k\ge 2$:} Let $Q\subset \PP^3$ be a smooth quadric. There is a scheme $U\cup V\subset \PP^3$ such that $U$ is the union of $(k-1)(3k-2)/2$ disjoint lines, $V$ is the union of $2k$ reducible conics with singular point contained in $Q$
and $h^i(\Ii_{U\cup V}(3k-1)) =0$, $i=0,1$.

\quad {\bf Observation 2:} $H'_{3k-1}$ is true for all $k\ge 2$ (\cite[Proposition 2.2]{hh}). Note that (except for the numerical inequality) $H'_{3k-1}$ is just $C(0,3k-1)$. Thus for all $k\ge 2$ we may use $C(0,3k)$, $C(0,3k+1)$ and $C(0,3k-1)$.

Fix a smooth quadric $Q\subset \PP^3$ and a ruling $|\Oo_Q(1,0)|$ of $Q$.

\quad{\bf Claim 2:} $C_e(0,3k)$ is true for all $k\ge
2$ and all $e\le 2k+1$.

\quad {\bf Proof of Claim 2:} Let $E\subset Q$ be
the union of $2k+1$ distinct elements of $|\Oo_Q(1,0)|$. Note that $u_{0,3k} =2k+1+u_{0,3k-2}$. Let
$T\subset
\PP^3$ be a general union of
$u_{0,3k-2}$ lines. Observation 1 gives $h^i(\Ii_T(3k-2)) =0$, $i=0,1$. Since $T$ is general, $T\cap E=\emptyset$ and hence to
prove Claim 2 it is sufficient to prove $h^i(\Ii_{T\cup E}(3k)) =0$. The residual exact sequence of $Q$ shows that it is
sufficient to prove $h^i(Q,\Ii _{Q\cap (E\cup T),Q}(3k))=0$, $i=0,1$, i.e. (since $E\cap T=\emptyset$) $h^i(Q,\Ii_{Q\cap
T,Q}(k-1,3k)) =0$. This is true, because $Q\cap T$ is a general union of $2u_{0,3k-2} = k(3k+1)$ points of $Q$.

\quad{\bf Claim 3:} $C_e(0,3k+1)$ is true for all $k\ge 2$ and all $e\le 2k+1$.

\quad {\bf Proof of Claim 3:} Note that $u_{0,3k+1}-u_{0,3k-1} =2k+1$. Let $E\subset Q$ be a union of $2k+1$
distinct elements of $|\Oo_Q(1,0)|$. Observation 2 gives the existence of
$U\cup V$, where
$U$ is a union of
$u_{0,3k-1}-2v_{0,3k-1} = (3k^2-5k+2)/2$ lines, $V$ is a union of $v_{0,3k-1}$ reducible conics, $\mathrm{Sing}(V)\subset Q$ and $h^i(\Ii_{U\cup
V}(3k-1)) =0$, $i=0,1$. Each reducible conic of $V$ intersects $Q$ in its singular point and 2 other points. For a general $U\cup V$ we may assume that $U\cap Q$ is a general union of
$2(u_{0,3k-1}-2v_{0,3k-1})+2v_{0,3k-1} = 3k^2-k+2$ points of $Q$, that $\mathrm{Sing}(V)$ is a general subset of $Q$ with cardinality
$v_{0,3k-1}$ and that $E\cap (U\cup V)=\emptyset$.
Set
$Z:=
\cup _{o\in \mathrm{Sing}(V)}2o$ and $T:= U\cup V\cup Z\cup E$. By Remark \ref{cv04} it is sufficient to prove
$h^i(\Ii_T(3k+1)) =0$,
$i=0,1$. Since $\Res_Q(T) =U\cup V$, the residual exact sequence of $Q$ shows that it is sufficient to prove
$h^i(Q,\Ii _{T\cap Q,Q}(3k+1)) =0$, i.e. (since $E\cap (U\cap V)=\emptyset$), $h^i(Q,\Ii_{(U\cup V\cup Z)\cap Q,Q}(k,3k+1))
=0$. Each connected component of $V\cup Z$ intersects $Q$ in a degree $5$ scheme (the union of two points and a double point of $Q$). Thus the scheme $(U\cup V\cup Z)\cap Q$ is a general union of $3k^2-k+2$ points of $Q$ and  $v_{0,3k-1} =2k$
double points $(2o,Q)$, $o\in \mathrm{Sing}(V)$, of $Q$. Since $\deg ((U\cup V\cup Z)\cap Q) =3k^2+5k+2 =(k+1)(3k+2)$ and
$U\cap Q$ is general, it is sufficient to prove $h^1(Q,\Ii _{A,Q}(k,3k+1))=0$, where $A$ is a general union of $2k$ double
points of $Q$. Use \cite{laf} or \cite{v}.

\quad {\bf Claim 4:} $C_e(0,3k-1)$ is true for all $k\ge 2$ and all $e\le 1$.

\quad {\bf Proof of Claim 4:} We have $v_{0,3k-1}=2k$. By $C(0,3k-3)$, $k\ge 2$,  a general union $U\subset
\PP^3$ of
$u_{0,3k-3}$ lines satisfies $h^i(\Ii_U(3k-3)) =0$ (Observation 1). The set $U\cap Q$ is formed by $(3k-1)k$ general points of $Q$. Note that
$u_{0,3k-1}-u_{0,3k-3} =2k+1$.  Let $F\subset Q$ be the union of $2k+1$ elements of $|\Oo_Q(1,0)|$, exactly $2k$ of them
containing a point of $U\cap Q$. Thus $U\cup F$ is a union of $u_{0,3k-1}-2v_{3k-1}$ lines, exactly one of them  contained in $Q$,
and $2k$ reducible conics whose singular point is contained in $Q$. Thus it is sufficient to prove $h^i(\Ii_{U\cup F}(3k-1))=0$, $i=0,1$.
Use first the residual exact sequence of $Q$, then the residual exact sequence of $F$ in $Q$ and that the $3k^2-3k$ points
$(U\cap Q)\setminus
U\cap F$ are general in
$Q$.\end{proof}

\begin{lemma}\label{c1}
$h^1(\Ii_{X_{2,0}}(t))=0$ for all $t\ge 3$ and $h^0(\Ii_{X_{2,0}}(3))=0$.
\end{lemma}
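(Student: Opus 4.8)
The plan is to reduce the lemma, via Lemma~\ref{d1}, to two vanishings at the critical value of $(2,0)$, the harder of which is already recorded in Remark~\ref{3lu1}. First I would compute the critical value of the pair $(2,0)$: since $2(3\cdot 2+1)=14>10=\binom{5}{3}$ while $2(3\cdot 3+1)=20=\binom{6}{3}$, the critical value is $d=3$. As $a=2>0$, Lemma~\ref{d1} applies and shows that $X_{2,0}$ has maximal rank if and only if $h^0(\Ii_{X_{2,0}}(2))=0$ and $h^1(\Ii_{X_{2,0}}(3))=0$; once maximal rank is known, $h^1(\Ii_{X_{2,0}}(t))=0$ for every $t\ge 3$. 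Thus the whole statement reduces to these two low-degree vanishings.

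The vanishing $h^0(\Ii_{X_{2,0}}(2))=0$ is Remark~\ref{2lu5} (two skew lines span $\PP^3$). For the key vanishing $h^0(\Ii_{X_{2,0}}(3))=0$ I would quote Claim~1 of Remark~\ref{3lu1} in the case $a=2$, $b=0$; the argument there is the residual exact sequence of Remark~\ref{c0}. Writing $X=2L\cup 2R$ for general skew lines $L,R$ and taking a smooth quadric $Q\supset L\cup R$, the two lines necessarily lie in the same ruling, so $\Res_Q(X)=L\cup R$ while $X\cap Q$ is the divisor $2L+2R\in|\Oo_Q(4,0)|$; hence $\Ii_{X\cap Q,Q}(3)\cong\Oo_Q(-1,3)$ has no sections, and since $h^0(\Ii_{L\cup R}(1))=0$, the sequence $0\to\Ii_{L\cup R}(1)\to\Ii_X(3)\to\Ii_{X\cap Q,Q}(3)\to 0$ forces $h^0(\Ii_X(3))=0$. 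Note that $X=2L\cup 2R$ with $L,R$ general is already a general member of $Z(2,0)$, so no semicontinuity is needed.

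It remains to convert this into the $h^1$ statement. By Remark~\ref{16jun1}, $\chi(\Ii_X(3))=\binom{6}{3}-h^0(\Oo_X(3))=20-(6\cdot 3+2)=0$, and $h^2(\Ii_X(3))=h^1(\Oo_X(3))=0$, $h^3(\Ii_X(3))=h^2(\Oo_X(3))=0$; therefore $h^1(\Ii_X(3))=h^0(\Ii_X(3))=0$. With both critical conditions verified, Lemma~\ref{d1} yields that $X_{2,0}$ has maximal rank, and in particular $h^1(\Ii_{X_{2,0}}(t))=0$ for all $t\ge 3$ (equivalently, one may bootstrap directly by Castelnuovo--Mumford regularity, exactly as in the proof of Lemma~\ref{d1}, using $h^1(\Ii_X(3))=h^2(\Ii_X(2))=h^3(\Ii_X(1))=0$). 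The only genuine input is the degree-$3$ vanishing $h^0(\Ii_{X_{2,0}}(3))=0$, and since that is already supplied by Remark~\ref{3lu1}, I do not expect any real obstacle here; the remainder is bookkeeping with the critical value and regularity.
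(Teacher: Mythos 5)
Your proof is correct and rests on the same key computation as the paper's: the residual exact sequence with respect to the smooth quadric $Q\supset L\cup R$, using $X\cap Q\in|\Oo_Q(4,0)|$ and $\Res_Q(X)=L\cup R$. The only (harmless) difference is organizational: the paper runs the residual sequence at every $t\ge 3$ to get $h^1(\Ii_X(t))=0$ directly and then deduces $h^0(\Ii_X(3))=0$ from $\chi(\Ii_X(3))=0$, whereas you argue at $t=3$ only (in the $h^0$ direction, via Remark~\ref{3lu1}) and propagate to higher $t$ through Lemma~\ref{d1} and Castelnuovo--Mumford regularity.
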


\begin{proof}
Set $X:= X_{2,0}$. Fix an integer $t\ge 3$. Let $Q$ be a smooth quadric containing $X_{\red}$, say as
an element of $|\Oo_Q(2,0)|$. Thus $X_{2,0}\cap Q\in |\Oo_Q(4,0)|$. Hence $h^1(Q,\Ii _{Q\cap X_{2,0},Q}(t)) =0$ for all $t\ge
3$.
Since $\Res_Q(X)$ is a union of $2$ disjoint lines, $h^1(\Ii_{\Res_Q(X)}(x))=0$ for all $x\ge 1$. Use the
residual exact sequence of $Q$ (Remark \ref{c0}). For $t=3$ we get $h^0(\Ii_X(3))=\binom{6}{3} -h^0(\Oo_X(3)) =0$.
\end{proof}

\begin{lemma}\label{c2}
$h^1(\Ii_{X_{3,0}}(t))=0$ for all $t\ge 5$, $h^0(\Ii_{X_{3,0}}(t)) =0$ for all $t\le 3$, $h^0(\Ii _{X_{3,0}}(4)) =1$ and
$h^1(\Ii_{X_{3,0}}(4))=5$.
\end{lemma}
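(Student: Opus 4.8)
The plan is to realise the general $X_{3,0}$ geometrically and to run a single residual exact sequence with respect to the quadric determined by the three lines. Write $X = 2L_1\cup 2L_2\cup 2L_3$, where the reductions $L_1,L_2,L_3$ are three general, hence pairwise skew, lines. Three general skew lines lie on a \emph{unique} quadric $Q$, which for a general choice is smooth, and they all belong to a single ruling, say $L_i\in |\Oo_Q(1,0)|$ for $i=1,2,3$. Since three pairwise skew lines on a smooth quadric are forced into one ruling and span $\PP^3$, this $X$ is an honest general element of $Z(3,0)$, so no semicontinuity argument is needed.

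First I would record the residual data with respect to $Q$. A local computation (taking $Q=\{z=y^2\}$ and $L=\{y=z=0\}$, so $\Ii_{2L}=(y,z)^2$) shows that for a line $L\subset Q$ one has $\Res_Q(2L)=L$ and that $2L\cap Q$ is the divisor $2L$ on $Q$, i.e. an element of $|\Oo_Q(2,0)|$. As the $2L_i$ are pairwise disjoint, $\Res_Q(X)=L_1\cup L_2\cup L_3$ and $X\cap Q$ is the divisor $2L_1+2L_2+2L_3\in |\Oo_Q(6,0)|$, whence $\Ii_{X\cap Q,Q}(t)\cong \Oo_Q(t-6,t)$ for every $t$. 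The residual exact sequence of Remark \ref{c0} then reads
$$0 \to \Ii_{L_1\cup L_2\cup L_3}(t-2)\to \Ii_X(t)\to \Oo_Q(t-6,t)\to 0.$$
The three lines $L_1\cup L_2\cup L_3$ have maximal rank by \cite{hh}, giving $h^0(\Ii_{L_1\cup L_2\cup L_3}(1))=0$, $h^0(\Ii_{L_1\cup L_2\cup L_3}(2))=1$ with $h^1(\Ii_{L_1\cup L_2\cup L_3}(2))=0$ (the unique quadric $Q$), and $h^1(\Ii_{L_1\cup L_2\cup L_3}(3))=0$. The cohomology of $Q\cong \PP^1\times\PP^1$ is computed by K\"unneth: $h^0(\Oo_Q(t-6,t))=0$ for $t\le 5$, while $h^1(\Oo_Q(-2,4))=5$ and $h^1(\Oo_Q(-1,5))=0$.

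With this the statement falls out degree by degree. For $t=3$ both $h^0(\Ii_{L_1\cup L_2\cup L_3}(1))$ and $h^0(\Oo_Q(-3,3))$ vanish, so $h^0(\Ii_X(3))=0$; since $h^0(\Ii_X(t))$ is nondecreasing in $t$, this yields $h^0(\Ii_X(t))=0$ for all $t\le 3$. For $t=4$ the term $\Oo_Q(-2,4)$ has no sections, so the injection $H^0(\Ii_{L_1\cup L_2\cup L_3}(2))\to H^0(\Ii_X(4))$ is an isomorphism and $h^0(\Ii_X(4))=1$; combined with $\chi(\Ii_X(4))=\binom{7}{3}-h^0(\Oo_X(4))=35-39=-4$ and $h^2(\Ii_X(4))=h^3(\Ii_X(4))=0$ (Remark \ref{16jun1}), this gives $h^1(\Ii_X(4))=5$. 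For $t=5$ both $h^1(\Ii_{L_1\cup L_2\cup L_3}(3))$ and $h^1(\Oo_Q(-1,5))$ vanish, so $h^1(\Ii_X(5))=0$; then the Castelnuovo--Mumford argument used in Lemma \ref{d1} (via $h^2(\Ii_X(4))=h^3(\Ii_X(3))=0$) propagates $h^1(\Ii_X(t))=0$ to all $t\ge 5$.

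The only genuinely delicate point is the degree $4$ computation, where the exceptional value $h^1=5$ arises: it comes precisely from $h^1(\Oo_Q(-2,4))=5$, i.e. from the fact that the three double lines, being cut on $Q$ in a single ruling, force the restricted system $|\Oo_Q(4,4)|$ to vanish on the heavy divisor $2L_1+2L_2+2L_3$ of type $(6,0)$. Everything else is routine: the local identification of $\Res_Q(2L)$ and of $2L\cap Q$, and the K\"unneth cohomology of line bundles on $Q$.
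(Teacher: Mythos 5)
Your proposal is correct and follows essentially the same route as the paper: take the unique (smooth) quadric $Q$ through the three general lines, note $X\cap Q\in|\Oo_Q(6,0)|$ and $\Res_Q(X)=X_{\red}$, and read everything off the residual exact sequence of $Q$. The extra details you supply (the local identification of $\Res_Q(2L)$ and $2L\cap Q$, the K\"unneth computation giving $h^1(\Oo_Q(-2,4))=5$, and the Euler characteristic check in degree $4$) are exactly what the paper leaves implicit.
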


\begin{proof}
Set $X:=X_{3,0}$. Let $Q$ be the only quadric containing $X_{\red}$. $Q$ is smooth. Call $|\Oo_Q(1,0)|$ the ruling of $Q$
such that $X\cap Q\in |\Oo_Q(6,0)|$. We have $h^1(Q,\Ii _{X\cap Q,Q}(t)) =0$ for all $t\ge 5$ and $h^0(Q,\Ii _{X\cap Q,Q}(t))
=0$ for all $t\le 5$. Thus $h^1(Q,\Ii _{X\cap Q,Q}(4)) =5$.
Since $\Res_Q(X)=X_{\red}$, $h^1(\Ii_{\Res_Q(X)}(t)) =0$ for all $t\ge 2$, $h^0(\Ii_{\Res_Q(X)}(2))
=1$ and  $h^0(\Ii_{\Res_Q(X)}(1))=0$. Use the residual exact sequence of $Q$.
\end{proof}

\begin{lemma}\label{c4}
$h^0(\Ii _{X_{3,1}}(5)) =4$, $h^1(\Ii _{X_{3,1}}(5)) =2$ and $h^0(\Ii_{X_{3,2}}(5)) =0$.
\end{lemma}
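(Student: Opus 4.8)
The plan is to place a general $X_{3,b}$ ($b=1,2$) so that the three lines to be doubled lie on a common quadric, and then run the residual exact sequence of Remark \ref{c0}. Since three general lines of $\PP^3$ are pairwise skew and lie on a unique quadric $Q$, which is smooth and contains them in a single ruling $|\Oo_Q(1,0)|$, I take $L_1,L_2,L_3\in |\Oo_Q(1,0)|$ general together with $b$ further general lines $R_1,\dots,R_b$ of $\PP^3$, and set $X:=2L_1\cup 2L_2\cup 2L_3\cup R_1\cup\cdots\cup R_b$. The resulting configuration of reductions is a general $(3+b)$-tuple of lines, because the condition ``three of them lie on a common quadric'' is automatic; hence $X$ is a general element of $Z(3,b)$ and its cohomology gives the generic value directly, with no semicontinuity argument needed.

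Next I compute the two schemes in \eqref{eqc1} for $t=5$. Since $L_i\subset Q$ we have $\Res_Q(2L_i)=L_i$, while $\Res_Q(R_j)=R_j$ because $R_j\not\subset Q$; thus $\Res_Q(X)=L_1\cup L_2\cup L_3\cup R_1\cup\cdots\cup R_b$ is a general union of $3+b$ lines, to which \cite{hh} applies. For $b=1$ (four lines, $\binom{6}{3}-16=4$) this gives $h^0(\Ii_{\Res_Q(X)}(3))=4$ and $h^1=0$, and for $b=2$ (five lines, $\binom{6}{3}-20=0$) it gives $h^0(\Ii_{\Res_Q(X)}(3))=h^1(\Ii_{\Res_Q(X)}(3))=0$. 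On the other side, since $L_i\subset Q$ the scheme $2L_i\cap Q$ is the divisor $2L_i\in |\Oo_Q(2,0)|$, so the three double lines cut on $Q$ a divisor $D\in |\Oo_Q(6,0)|$, while each general $R_j$ meets $Q$ in two general points; hence $X\cap Q=D\cup S$ with $S$ a set of $2b$ general points disjoint from $D$.

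The key computation is $h^0(Q,\Ii_{X\cap Q,Q}(5))=0$. A section of $\Oo_Q(5,5)$ vanishing on $D\in |\Oo_Q(6,0)|$ must be divisible by the $(6,0)$-form cutting $D$, hence lies in $H^0(\Oo_Q(-1,5))=0$; so already $h^0(Q,\Ii_{D,Q}(5))=0$, and a fortiori $h^0(Q,\Ii_{X\cap Q,Q}(5))=0$. Moreover $\Ii_{D,Q}(5)\cong \Oo_Q(-1,5)$ also has $h^1=0$, so the sequence $0\to \Ii_{X\cap Q,Q}(5)\to \Ii_{D,Q}(5)\to \Oo_S\to 0$ gives $h^1(Q,\Ii_{X\cap Q,Q}(5))=\#S=2b$; this last value will be used only for $b=1$.

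Finally I assemble everything through the long exact sequence of \eqref{eqc1} with $t=5$. For $b=1$ the injection $H^0(\Ii_{\Res_Q(X)}(3))\hookrightarrow H^0(\Ii_X(5))$ together with $h^0(Q,\Ii_{X\cap Q,Q}(5))=0$ yields $h^0(\Ii_{X_{3,1}}(5))=4$; and since $h^1(\Ii_{\Res_Q(X)}(3))=h^2(\Ii_{\Res_Q(X)}(3))=0$ (the latter because $\Res_Q(X)$ is a disjoint union of lines), the sequence forces $h^1(\Ii_{X_{3,1}}(5))\cong H^1(Q,\Ii_{X\cap Q,Q}(5))$, i.e. $h^1(\Ii_{X_{3,1}}(5))=2$. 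For $b=2$ both $h^0(\Ii_{\Res_Q(X)}(3))=0$ and $h^0(Q,\Ii_{X\cap Q,Q}(5))=0$, so the same sequence gives $h^0(\Ii_{X_{3,2}}(5))=0$ at once. The only genuine subtleties are verifying that the chosen configuration is truly general (so that \cite{hh} applies to $\Res_Q(X)$) and the bookkeeping of the $(6,0)$-divisor on $Q$ that annihilates $H^0$; both are routine here, and the $(6,0)$-divisor is exactly what produces the failure of maximal rank for $X_{3,1}$.
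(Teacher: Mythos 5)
Your proof is correct and follows essentially the same route as the paper: specialize the three doubled lines onto the unique smooth quadric $Q$ through them, apply the residual exact sequence of Remark \ref{c0}, kill $H^0$ of the trace via the $(6,0)$-divisor on $Q$, and evaluate the residual union of $3+b$ general lines by \cite{hh}. The only (harmless) difference is that you extract $h^1(\Ii_{X_{3,1}}(5))=2$ from the trace sequence on $Q$, whereas the paper gets it from the Euler characteristic.
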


\begin{proof}
Set $X:= X_{3,1}$ and write $X =2L_1\cup 2L_2\cup 2L_3\cup L_4$ with $L_1\cup L_2\cup L_3\cup L_4$ a general union
of $4$ lines. Let $Q$ be the only quadric containing $L_1\cup L_2\cup L_3$. $Q$ is smooth. Note that $h^0(Q,\Ii
_{(2L_1,Q)\cup (2L_2,Q)\cup (2L_3,Q),Q}(5)) =0$. We have $\Res_Q(X) =L_1\cup L_2\cup L_3\cup L_4$. Thus $h^1(\Ii
_{\Res_Q(X)}(3)) =0$ and
$h^0(\Ii_{\Res_Q(X)}(3))=4$ (\cite{hh}). The residual exact sequence of $Q$ gives $h^0(\Ii_X(5))=4$. Since $h^0(\Oo_{\PP^3}(5))
=56$ and $h^0(\Oo _X(5)) =4$,
we get $h^1(\Ii _X(5))=2$. Take as $X_{3,2}$ the curve $X':= X\cup L$ with $L$ a general line. Since $\Res_Q(X')$ is a general union of $5$ lines, $h^i(\Ii_{\Res_Q(X')}(3)) =0$, $i=0,1$. Since $X'\cap Q$ is a general union of an element of $|\Oo_Q(6,0)|$ and $4$ points, $h^0(\Ii_{X'}(5)) =0$.
\end{proof}

\begin{lemma}\label{t5}
We have $h^0(\Ii _{X_{4,0}}(6))=10$ and $h^1(\Ii _{X_{4,0}}(6)) =2$.
\end{lemma}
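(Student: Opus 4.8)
The plan is to run the residual exact sequence of Remark \ref{c0} with respect to a smooth quadric, reducing the degree $6$ computation on $X_{4,0}$ to a degree $4$ computation on a scheme in $Z(1,3)$ (controlled by \cite{a}) together with a computation on the quadric, where the defect will appear. Write $X := X_{4,0} = 2L_1\cup 2L_2\cup 2L_3\cup 2L_4$ with $L_1\cup\cdots\cup L_4$ a general union of $4$ lines. The three general lines $L_1,L_2,L_3$ are pairwise skew and lie on a unique quadric $Q$, which is smooth, and they form part of one ruling, say $L_i\in |\Oo_Q(1,0)|$; the fourth line $L_4$ is general, so $L_4\not\subset Q$ and $L_4\cap Q=\{p,p'\}$ with $p,p'$ general points of $Q$. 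I will analyze the long cohomology sequence of
\[
0\to \Ii_{\Res_Q(X)}(4)\to \Ii_X(6)\to \Ii_{X\cap Q,Q}(6)\to 0 .
\]

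First the residual scheme. A local computation shows that a smooth surface containing a line $L$ residuates $2L$ to the reduced line $L$, while it leaves a double line not contained in it unchanged; since the four double lines are disjoint, $\Res_Q(X) = L_1\cup L_2\cup L_3\cup 2L_4$. As $L_1,L_2,L_3,L_4$ are $4$ general lines, this is a general element of $Z(1,3)$. Here $h^0(\Oo_{\Res_Q(X)}(4)) = (3\cdot 4+1)+3(4+1) = 28$, so $\chi(\Ii_{\Res_Q(X)}(4)) = \binom{7}{3}-28 = 7>0$. By \cite{a} a general element of $Z(1,3)$ has maximal rank, whence $h^0(\Ii_{\Res_Q(X)}(4)) = 7$ and $h^1(\Ii_{\Res_Q(X)}(4)) = 0$.

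Next I compute $X\cap Q$, the heart of the matter. For $L_i\subset Q$ a local computation gives that $2L_i\cap Q$ is the divisor $2L_i\in |\Oo_Q(2,0)|$, while for $L_4\not\subset Q$ the scheme $2L_4\cap Q$ is the union of the two planar double points $(2p,Q)$ and $(2p',Q)$. Hence $X\cap Q$ is the divisor $D:=2L_1+2L_2+2L_3\in |\Oo_Q(6,0)|$ together with $(2p,Q)\cup(2p',Q)$, and $\Oo_Q(6)=\Oo_Q(6,6)$. A section of $\Oo_Q(6,6)$ vanishing on $D$ is the product of the equation of $D$ with a section of $\Oo_Q(0,6)$; since $p,p'\notin D$, it vanishes doubly at $p,p'$ if and only if the corresponding element of $H^0(\Oo_Q(0,6))\cong H^0(\Oo_{\PP^1}(6))$ (dimension $7$) has double roots at the two images of $p,p'$ under the second projection, which are distinct for general $L_4$. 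Because sections of $\Oo_Q(0,6)$ are constant along the first ruling, each double point imposes only $2$ conditions (not $3$) on this $7$-dimensional space, so $h^0(Q,\Ii_{X\cap Q,Q}(6)) = 7-4 = 3$.

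Finally, feeding these into the long exact sequence and using $h^1(\Ii_{\Res_Q(X)}(4)) = 0$, the restriction $H^0(\Ii_X(6))\to H^0(\Ii_{X\cap Q,Q}(6))$ is surjective, giving $h^0(\Ii_X(6)) = 7+3 = 10$. Since $\chi(\Ii_X(6)) = \binom{9}{3}-4(3\cdot 6+1) = 84-76 = 8$ and $h^2(\Ii_X(6)) = h^3(\Ii_X(6)) = 0$ (Remark \ref{16jun1}), we get $h^1(\Ii_X(6)) = 10-8 = 2$ (consistently, $\chi(\Ii_{X\cap Q,Q}(6)) = 49-48 = 1$ forces $h^1(Q,\Ii_{X\cap Q,Q}(6)) = 2$, and the tail of the long exact sequence then yields $H^1(\Ii_X(6))\cong H^1(\Ii_{X\cap Q,Q}(6))$). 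I expect the main obstacle to be exactly the cohomology on $Q$: the nonvanishing $h^1$, and hence the failure of maximal rank in degree $6$, is produced precisely by the two double points of $Q$ carried by $2L_4$ imposing fewer conditions than expected on the residual system $|\Oo_Q(0,6)|$, and one must carefully verify the genericity ($p,p'\notin D$ and distinct second-ruling images) that pins the count at $3$.
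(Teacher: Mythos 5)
Your proof is correct and follows essentially the same route as the paper's: residuation with respect to the unique smooth quadric $Q$ through $L_1,L_2,L_3$, identifying $\Res_Q(X)=L_1\cup L_2\cup L_3\cup 2L_4$ handled by \cite{a}, and computing the trace $X\cap Q=(2L_1+2L_2+2L_3)\cup (2p,Q)\cup (2p',Q)$ to get $h^0=3$, $h^1=2$ on $Q$. The only difference is that you spell out why the two planar double points impose just four conditions on $\Oo_Q(0,6)$, a detail the paper leaves implicit.
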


\begin{proof}
Set $X:= X_{4,0}$ and write $X =2L_1\cup 2L_2\cup 2L_3\cup 2L_4$ with $X_{\red}=L_1\cup L_2\cup L_3\cup L_4$ a general union
of $4$ lines. Let $Q$ be the only quadric containing $L_1\cup L_2\cup L_3$. Note that $X\cap Q$ is the union of $3$
disjoint double lines of $Q$ and $2$ double points with $L_4\cap Q$ as its reduction. $Q$ is a smooth quadric and we
call $|\Oo_Q(1,0)|$ the pencil of lines of $Q$ containing $L_1$. Thus $\Ii _{(2L_1,Q)\cup (2L_2,Q)\cup (2L_3,Q),Q}(6,6)
\cong \Oo_Q(0,6)$. Thus $h^0(Q,\Ii _{X\cap Q,Q}(6,6)) = 3$ and $h^1(Q,\Ii _{X\cap Q,Q}(6,6)) = 2$. We
have
$\Res_Q(X) =L_1\cup L_2\cup L_3\cup 2L_4$. We have
$h^1(\Ii _{\Res_Q(X)}(4)) =0$ by \cite{a} and hence $h^0(\Ii _{\Res_Q(X)}(4)) =7$. The residual exact sequence of $Q$
gives $h^0(\Ii_X(6)) =10$ and hence $h^1(\Ii_X(6))=2$.
\end{proof}

\begin{remark}\label{set1}
Since $h^0(\Ii _{X_{4,1}}(6)) \ge h^0(\Ii _{X_{4,0}}(6)) -7$ and $h^1(\Ii _{X_{4,1}}(6)) \ge h^1(\Ii _{X_{4,0}}(6))$,
Lemma \ref{t5} gives $h^i(\Ii _{X_{4,1}}(6)) >0$ for $i=0,1$.
\end{remark}

\section{End of the proofs of the theorems}

\begin{lemma}\label{c3}
Theorem \ref{i1} is true for all $d\le 8$ and $F(2,9)$ is true.
\end{lemma}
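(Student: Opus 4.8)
The plan is to fix the degree $d$ and prove maximal rank of a general $X_{2,b}$ in degree $d$ for \emph{every} $b$ at once, running over $d=4,5,6,7,8$ (the cases $d\le 3$ are already covered by Remarks \ref{2lu5} and \ref{3lu1}). For fixed $d$ I would reduce to two threshold statements. Since $h^1(\Ii_{X_{2,b}}(d))$ can only drop when a line is removed and $h^0(\Ii_{X_{2,b}}(d))$ can only drop when a general line is added, once $h^1(\Ii_{X_{2,b_{2,d}}}(d))=0$ we get maximal rank for all $b\le b_{2,d}$, and once $h^0(\Ii_{X_{2,b_{2,d}+1}}(d))=0$ we get maximal rank for all $b\ge b_{2,d}+1$ (recall $c_{2,d}\le d$, so $h^0(\Oo_{X_{2,b_{2,d}+1}}(d))=\binom{d+3}{3}+(d+1-c_{2,d})$ really does exceed $\binom{d+3}{3}$). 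Hence Theorem \ref{i1} in degree $d$ is equivalent to the pair of assertions $E(2,d)$ and $F(2,d+1)$; the full statement for $d\le 8$ thus requires $E(2,d)$ for $4\le d\le 8$ and $F(2,d+1)$ for $4\le d\le 8$, the last of these being exactly $F(2,9)$.

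The engine is the residual sequence of Remark \ref{c0} with respect to a smooth quadric $Q$ chosen so that $L_1\cup L_2\in|\Oo_Q(2,0)|$, where $X=2L_1\cup 2L_2\cup(\text{lines})$. Then $\Res_Q(2L_i)=L_i$ and $2L_i\cap Q=(2L_i,Q)\in|\Oo_Q(2,0)|$; a line kept transverse to $Q$ reproduces itself in the residual and meets $Q$ in two points, while a line placed in the ruling $|\Oo_Q(1,0)|$ disappears from the residual and contributes a $(1,0)$-component to the trace. I would split the $b$ extra lines, putting $j$ of them in the ruling and keeping the other $k=b-j$ transverse, so that $\Res_Q(X)$ is a general union of $k+2$ lines, whose postulation in degree $d-2$ is known from \cite{hh} (or \cite{a}), and $X\cap Q$ is a divisor of type $(4+j,0)$ together with $2k$ general points, i.e. the sheaf $\Oo_Q(d-4-j,d)$ twisted down by $2k$ general points of $\PP^1\times\PP^1$, controlled by \cite{cm} and elementary bundle cohomology. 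Feeding these into $h^i(\Ii_X(d))\le h^i(\Ii_{\Res_Q(X)}(d-2))+h^i(Q,\Ii_{X\cap Q,Q}(d))$ from Remark \ref{c0}, for each $d$ one picks $j$ making both summands vanish in the relevant degree (and, should a parity remainder appear, one degenerates a pair of skew lines to a conic with node on $Q$, Remark \ref{cv04}, to put a double point on the quadric). For $d=8$ this is precisely the mechanism of Lemmas \ref{3lu6} and \ref{3lu7} fed by $C_2(0,6)$ from Lemma \ref{c8}, which one may invoke instead to obtain $E(2,8)$ and $F(2,9)$ after checking their (easy) numerical hypotheses.

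The delicate case, and the source of the exception, is step (a), degree $4$. With $b_{2,4}=1$, both $E(2,4)$ (that is $h^1(\Ii_{X_{2,1}}(4))=0$) and the vanishing $h^0(\Ii_{X_{2,3}}(4))=0$ come straight out of the residual sequence, the residual being three, respectively five, general lines and the trace being $\Oo_Q(0,4)$ cut by general points. What fails is $F(2,5)$: the Horace estimate predicts $h^0(\Ii_{X_{2,2}}(4))=0$, but an unexpected quartic intervenes. Writing $X_{2,2}=2L_1\cup 2L_2\cup M_1\cup M_2$, let $Q_1$ be the unique quadric through the three skew lines $L_1,L_2,M_1$ and $Q_2$ the unique quadric through $L_1,L_2,M_2$; the quartic $Q_1Q_2$ is singular along $Q_1\cap Q_2\supset L_1\cup L_2$, hence contains $2L_1\cup 2L_2$, and it contains $M_1\subset Q_1$ and $M_2\subset Q_2$, so $Q_1Q_2\supset X_{2,2}$ and $h^0(\Ii_{X_{2,2}}(4))\ge 1$. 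The residual sequence gives the reverse inequality, the four residual lines contributing no section (no quadric contains four general lines) and the trace on $Q$ contributing exactly one; hence $h^0(\Ii_{X_{2,2}}(4))=1$, and then $h^1(\Ii_{X_{2,2}}(4))=h^0(\Ii_{X_{2,2}}(4))-\chi(\Ii_{X_{2,2}}(4))=1-(-1)=2$ by Remark \ref{16jun1}. The main obstacle I anticipate is exactly this: because the expected dimension is negative, the Horace method cannot be applied mechanically, and at each small degree one must verify that no analogous reducible surface survives; keeping the bookkeeping of the remainder $c_{2,d}$ on the quadric side correct is where the argument is most error-prone.
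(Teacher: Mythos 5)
Your proposal is correct and rests on the same engine as the paper's proof: the residual exact sequence with respect to a smooth quadric $Q$ containing $(2L_1\cup 2L_2)_{\red}$ as a $(2,0)$-divisor, with some of the simple lines specialized into the ruling $|\Oo_Q(1,0)|$ (so they vanish from the residual and enlarge the divisorial part of the trace) and the rest kept transverse. The differences are mostly organizational, and in one place substantive. For $d=6,7,8$ the paper proceeds incrementally, reusing the degree $d-2$ configuration and adding ruling lines (step (c) passes through $X_{2,1}$ in degree $4$ and through a $Z(1,5)$ configuration whose line on $Q$ is upgraded to a double line; step (e) is literally ``degree $6$ plus six ruling lines''), whereas you do each degree in one shot by choosing the split $j+k=b$; admissible splits do exist in every case ($j=1,k=3$ for $d=5$; $j=1$ resp.\ $j=2$ with $k=5$ for $d=6$; $j=2$ with $k=7$ resp.\ $8$ for $d=7$; $j=2$ resp.\ $3$ with $k=10$ for $d=8$), so your route closes without ever needing the conic degeneration of Remark \ref{cv04}, and your alternative of delegating $d=8$ to Lemmas \ref{3lu6} and \ref{3lu7} fed by $C_2(0,6)$ is also legitimate since $b_{2,8}=12\ge u_{0,6}-2=10$. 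The genuinely different ingredient is the exceptional case: the paper gets $h^0(\Ii_{X_{2,2}}(4))=1$ purely by a residual computation with respect to the quadric through \emph{three} of the four lines, while you exhibit the section explicitly as the reducible quartic $Q_1Q_2$, singular along $Q_1\cap Q_2\supset L_1\cup L_2$, and use the residual sequence only for the upper bound $h^0\le 1$; this is more illuminating about why the exception occurs, though it proves nothing beyond what the paper's computation already gives.
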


\begin{proof}
Since $h^0(\Ii_{X_{2,0}}(3))=0$ (Lemma \ref{c1}), we may assume $d\in \{4,5,6,7,8\}$. Recall that
$h^1(\Ii_{X_{2,0}}(t)) =0$ for all
$t\ge 3$ (Lemma \ref{c1}). 

\quad (a) Take $d=4$. Since $\binom{7}{3} =35$ and $h^0(\Oo_{X_{2,b}}(4)) = 26+5b$, it is sufficient to prove
$h^1(\Ii_{X_{2,1}}(4))=0$
and $h^0(\Ii _{X_{2,2}}(4))=1$. Take a general line $G\subset \PP^3$ and set $T:= X_{2,0}\cup G$. Let $Q'$ be a smooth quadric
containing
$(X_{2,0})_{\red}$. We have
$h^1(Q,\Ii_{T\cap Q,Q}(4))=h^1(\Ii _{\Res_Q(T)}(2)) =0$ and hence $h^1(\Ii_{X_{2,1}}(4))=0$. Write $X_{2,2} =A\cup
R\cup L$ with $R$ and
$L$ lines. Let $Q$ be the only quadric containing $A_{\red}\cup R$. We have $h^0(Q,\Ii_{(A_{\red}\cup R)\cap Q,Q}(4)) =0$ and
hence $h^0(Q,\Ii_{X_{2,2}\cap Q,Q}(4)) =0$. Since $\Res_Q(X_{2,2})$ is the union of $3$ disjoint lines, $h^0(\Ii
_{\Res_Q(X_{2,2})}(2))=1$. Use Remark \ref{c0}.

\quad (b) Take $d=5$. Since $\binom{8}{3} =56$ and $h^0(\Oo_{X_{2,b}}(5)) = 32+6b$, it is sufficient to prove
$h^i(\Ii_{X_{2,4}}(5))=0$, $i=0,1$. Take general $L,R, D\in |\Oo_Q(1,0)|$. We take a general union $E\subset \PP^3$ of $3$ lines.
Set $X:= 2L\cup 2D\cup R\cup E$. Since $\Res_Q(X) =E\cup L\cup D$ is a general union of $5$ lines,
$h^i(\Ii_{\Res_Q(X)}(3))=0$, $i=0,1$ (\cite{hh}).
We have $h^i(Q,\Ii_{X\cap Q,Q}(5)) = h^i(Q,\Ii_{E\cap Q,Q)}(0,5)) =0$, $i=0,1$, because $E\cap Q$ is a general union of $6$
points. 

\quad({c}) Take $d=6$. Since $\binom{9}{3} =84$ and $h^0(\Oo_{X_{2,b}}(6)) = 38+7b$, it is sufficient to prove $h^0(\Oo _{X_{2,7}}(6))=0$
and $h^1(\Ii _{X_{2,6}}(6))=0$.  Note that $h^1(\Ii _{X_{2,6}}(6)) =0$ if and only if $h^0(\Ii _{X_{2,6}}(6)) =4$. Step (a) gives $h^1(\Ii _{X_{2,1}}(4)) =0$, i.e. $h^0(\Ii_{X_{2,1}}(4)) =4$. Take a general $Y\in Z(2,1)$. Thus $h^0(\Ii
_Y{X_{2,1}}(4)) =4$ and $Q\cap Y$ is a general union of $4$ double points of $Q$ and $2$ points of $Q$. Take a general union
$E\subset Q$ of $5$ elements of $|\Oo_Q(1,0)|$. Set $X:= Y\cup F$. To prove that $h^1(\Ii_{X_{2,6}}(6)) =0$ it is sufficient
to prove that
$h^1(\Ii_X(6)) =0$. By the residual exact sequences first of $Q$ and then  of $F$ as a divisor of $Q$ it is sufficient
to use that $h^i(Q,\Ii _{Y\cap Q,Q}(1,6))=0$, $i=0,1$ (\cite{laf,v}). Note that $h^0(\Oo _{X_{2,7}}(6))=0$ if and only if
$h^1(\Ii_{X_{2,7}}(6)) =3$.  Note that $h^0(\Ii _{X_{1,5}}(4)) =0$ and $h^1(\Ii _{X_{1,5}}(4)) =3$. Since any line is
contained in a smooth quadric, there is $W =A\cup U\in Z(1,5)$ such that $h^1(\Ii_W(4))=3$, $h^0(\Ii_W(4)) =0$ and $U =U'\cup
L$ with $L\in |\Oo_Q(1,0)|$. We may also assume that $A\cup U'$ is general, so that $(A\cup U')\cap Q$ is a general union of
$2$ double points of $Q$ and $8$ points of $Q$. Let $E\subset Q$ be a general union of $3$ elements of $|\Oo_Q(1,0)|$. Set
$T:= A\cup 2L\cup U'\cup E$. Note that $T\in Z(2,7)$,  $\Res_Q(T)=W$ and $(2L\cup E)\cap Q\in |\Oo_Q(5,0)|$. Use the
residual exact sequences first of $Q$, then of $(2L\cup E)\cap Q$ and then use \cite{laf,v}.

\quad (d) Take $d=7$. Since $\binom{10}{3} =120$ and $h^0(\Oo_{X_{2,b}}(7)) = 44+8b$, it is sufficient to prove $h^0(\Oo _{X_{2,10}}(7))=0$ and $h^1(\Ii _{X_{2,9}}(7))=0$. 
Set $Y:= X_{2,4}$. Let $E\subset Q$ be a general union of $6$ (resp. $5$) elements of $|\Oo_Q(1,0)|$. Set $X:= Y\cup E$. Since $\Res_Q(X)=Y$ and $h^i(\Ii _Y(5))=0$, $i=0,1$ (step (b)), it is sufficient to prove that $h^0(Q,\Ii _{Y\cap Q,Q}(1,7)) =0$ (resp. $h^1(Q,\Ii _{Y\cap Q,Q}(2,7)) =0$). Since $Y\cap Q$ is a general union of $4$ double points of $Q$
and $8$ points, it is sufficient to quote \cite{laf,v}.

\quad (e) Take $d=8$. Since $\binom{11}{3} = 165$ and $h^0(\Oo_{X_{2,b}}(8)) = 50+9b$, it is sufficient to prove $h^0(\Oo _{X_{2,13}}(8))=0$ and $h^1(\Ii _{X_{2,12}}(8))=0$.
To prove $h^1(\Ii _{X_{2,12}}(8))=0$ (resp. $h^0(\Oo _{X_{2,13}}(8))=0$) it is sufficient to use that $h^1(\Ii _{X_{2,6}}(6))=0$ (resp. $h^0(\Oo _{X_{2,7}}(6))=0$) and add $6$ general elements of $|\Oo_Q(1,0)|$. 
\end{proof}

\begin{proof}[Proof of Theorem \ref{i1}:]
Fix a general $X\in Z(2,b)$ and let $d$ be its critical value, i.e. the first integer $d\ge 2$ such that $b\le b_{2,d}$. By Lemma \ref{d1} it is sufficient to prove $h^0(\Ii _X(d-1)) =0$ and $h^1(\Ii _X(d)) =0$, i.e. it is sufficient to prove $F(2,d)$ and $E(2,d)$.
For $d\le 8$ use Lemma \ref{c3}, which also prove $F(2,9)$. For $d\ge 9$ we need to prove $E(2,d)$ and $F(2,d)$. To prove $E(2,d)$ (resp. $F(2,d)$)
it is sufficient to have $C_2(0,d-2)$ (resp. $C_2(0,d-3)$) and to test some numerical inequalities, which are satisfied because $a=2$ and hence $d-1>3a$ (apply Lemma \ref{n3lu1} to the integers $d$ and $d-1$).
To have $C_2(0,d-2)$ and $C_2(0,d-3)$ use Lemmas \ref{c8}, \ref{3lu3} and \ref{n3lu1}. The only missing case is $C_2(0,8)$, which is not covered by Lemma \ref{n3lu1}. Thus we need
to prove $E(2,10)$ and $F(2,11)$, i.e. (since $b_{2,10}= 20$ and $c_{2,10} =4$) we need to prove $h^1(\Ii _{X_{2,20}}(10)) =0$ and $h^0(\Ii_{X_{2,21}}(10))=0$. Recall that
$b_{2,8} =12$ and $c_{2,8} =7$. Thus adding $8$ general elements of $|\Oo_Q(1,0)|$ to $X_{2,9}$ we get $h^0(\Ii_{X_{2,21}}(10))=0$ as in step (d) of the proof of Lemma \ref{c3}. Since $c_{2,10}=4$,  $h^1(\Ii _{X_{2,20}}(10)) =0$ if and only if $h^0(\Ii _{X_{2,20}}(10)) =4$. We have $b_{0,8} =18$ and $c_{0,8} = 3\le  c_{2,10}$. By \cite{hh} $h^1(\Ii _{X_{0,8}}(8)) =0$. Since any two disjoint lines in $\PP^3$ are contained in a smooth quadric, with no loss of generality we may write $X_{0,8} =U'\cup U''$ with $U''$ the union of $2$ general
elements of $|\Oo_Q(1,0)|$. Take a general union $E\subset Q$ of $4$ elements of $|\Oo_Q(1,0)|$. Set $T:= (\cup_{L\in
U''}2L)\cup U'\cup E$. It is sufficient to prove that $h^1(\Ii _T(10)) =0$. Since $\Res_Q(T) =U$, it is sufficient to prove
that
$h^1(Q,\Ii _{T\cap Q,Q}(10)) =0$, i.e. that $h^1(Q,\Ii _{U'\cap Q,Q}(2,10))=0$, which is true because $U'\cap Q$ is a general union of $32$ points of $Q$.
\end{proof}

\begin{lemma}\label{6lu4}
Theorem \ref{i2} is true for all $d\le 11$  and $F(3,12)$ is true.
\end{lemma}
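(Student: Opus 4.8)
The plan is to follow the template of Lemma~\ref{c3} step by step. By Lemma~\ref{d1}, a general $X\in Z(3,b)$ has maximal rank if and only if $h^0(\Ii_X(d-1))=0$ and $h^1(\Ii_X(d))=0$ at its critical value $d$; since for a fixed degree $h^1(\Ii_{X_{3,b}}(\,\cdot\,))$ cannot increase when lines are deleted and $h^0(\Ii_{X_{3,b}}(\,\cdot\,))$ cannot increase when lines are added, it is enough to prove, for each critical value $d$ with $5\le d\le 11$, the assertion $E(3,d)$, i.e. $h^1(\Ii_{X_{3,b_{3,d}}}(d))=0$, together with $F(3,d)$, i.e. $h^0(\Ii_{X_{3,b_{3,d-1}+1}}(d-1))=0$. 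I would organize the argument as one step per degree $d\in\{6,7,8,9,10,11\}$, each step delivering $E(3,d)$ and $F(3,d+1)$ simultaneously (both are statements about degree $d$); the step $d=11$ then also yields the separately advertised $F(3,12)$, namely $h^0(\Ii_{X_{3,b_{3,11}+1}}(11))=0$.

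The two exceptional behaviours occur at the smallest critical value $d=5$ (attained by $b\in\{0,1\}$) and are not produced by these constructions but read off from the already-proved Lemmas~\ref{c2} and~\ref{c4}. Lemma~\ref{c2} settles $X_{3,0}$ completely, giving maximal rank in every degree except $4$, and Lemma~\ref{c4} furnishes $h^0(\Ii_{X_{3,1}}(5))=4$, $h^1(\Ii_{X_{3,1}}(5))=2$ together with $h^0(\Ii_{X_{3,2}}(5))=0$, the last being exactly $F(3,6)$. It then remains only to confirm that $X_{3,1}$ is of maximal rank away from $d=5$: the vanishing $h^0(\Ii_{X_{3,1}}(4))=0$ holds because the unique quartic through $X_{3,0}$ is the doubled quadric $2Q$ while a general extra line is not contained in $Q$, and $h^1(\Ii_{X_{3,1}}(t))=0$ for $t\ge 6$ follows from $E(3,6)$ by the monotonicity quoted above and the Castelnuovo--Mumford propagation already used in Lemma~\ref{d1}.

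For the generic steps $d=6,\dots,11$ I would use the residual construction of Lemma~\ref{c3} and of the Claims in Lemma~\ref{c8}. First I place the reductions of the three double lines on a smooth quadric $Q$, so that the three double lines cut $\Oo_Q(6,0)$; then I split the remaining lines into $m$ lines lying in the ruling $|\Oo_Q(1,0)|$ and the rest in general position, degenerating a pair of skew lines to a reducible conic plus a double point by Remark~\ref{cv04} whenever a point of $Q$ is needed. The residual exact sequence of $Q$ (Remark~\ref{c0}) reduces each vanishing to two inputs: on $\PP^3$, the cohomology in degree $d-2$ of $\Res_Q(X)$, a union of $3$ reduced lines and the off-$Q$ lines, controlled by \cite{hh} (and by Lemma~\ref{c2} in case it degenerates to three double lines); and on $Q$, a statement $h^i(Q,\Ii_{S}(d-6-m,d))=0$ for a general union $S$ of points and double points of $Q$, settled by \cite{laf,v}. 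Choosing $m$ so that both the bidegree and the point count are admissible closes each step, occasionally after a second residual taken along an element of $|\Oo_Q(1,0)|$ as a divisor of $Q$, exactly as in steps (c)--(e) of Lemma~\ref{c3}.

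The hard part will be the degrees whose residual lands in a degree $\equiv 2 \pmod 3$, namely $d=7$ (residual degree $5$) and, above all, $d=10$ (residual degree $8$, together with the paired $F(3,11)$); these play here the role that $C_2(0,8)$ played for $a=2$. In those residual degrees $v_{0,d-2}>0$, so general lines alone do not transition cleanly and the naive split fails: putting enough reductions on $Q$ forces $d-6-m<0$, while keeping $m$ small overfills the residual degree. I therefore expect to need the reducible-conic and double-point-on-$Q$ technique of Lemma~\ref{c8} (Claims~3 and~4) and the elliptic-curve restriction trick of Lemma~\ref{3lu3}, carrying the defect into a general union $S$ of double points of $Q$ and forcing $h^i(Q,\Ii_{S}(\,\cdot\,,\,\cdot\,))=0$ via \cite{laf,v}. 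Verifying that the numerical bookkeeping balances in precisely these tight cases, where Lemma~\ref{n3lu1} does not yet apply, is the main obstacle I anticipate.
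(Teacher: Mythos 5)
Your skeleton is the right one and for $d\equiv 0,2\pmod 3$ (residual degree with $c_{0,d-2}=0$) your construction coincides with the paper's: three reduced lines on a smooth quadric $Q$, doubled, with the remaining lines general or placed in $|\Oo_Q(1,0)|$, the residual handled by \cite{hh} and the trace by \cite{laf,v}. You also correctly isolate $d=7$ and $d=10$ as the problematic degrees. The gap is in the tool you propose for exactly those degrees. You want to run the $C_e(0,d-2)$ machinery of Lemmas \ref{c8} and \ref{3lu3}, i.e.\ take as residual a curve in $W(0,u_{0,d-2}-2v_{0,d-2},v_{0,d-2})$ with $3$ of its line components inside $Q$ (to be doubled). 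For $d=7$ this is numerically impossible: $u_{0,5}=10$ and $v_{0,5}=4$, so $u_{0,5}-2v_{0,5}=2<3$ and the assertion $C_3(0,5)$ cannot even be formulated --- there are only two free lines available to put on $Q$. For $d=10$ you would need $C_3(0,8)$, which none of the quoted lemmas provides (Lemma \ref{c8}({c}) only gives $e\le 1$ for degrees $3k-1$, and Lemma \ref{3lu3} requires $k\ge 4$, i.e.\ degree $\ge 11$); indeed the paper itself singles out even $C_2(0,8)$ as a missing case in the proof of Theorem \ref{i1}. So your plan, as written, cannot close $F(3,8)$, $E(3,10)$ or $F(3,11)$. (Note that $E(3,7)$ alone does go through with all six extra lines general, since the residual of $9$ general lines in degree $5$ has $h^0=2$ and the trace absorbs the remaining $4$ sections; it is the $h^0$-statements that break, because $c_{0,5}=2$ and $c_{0,8}=3$ do not match $c_{3,7}$ and $c_{3,10}$.)

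The paper's actual device for $d\in\{7,10\}$ is different and you do not have it: take the residual scheme to be a general $X_{1,d-2}\in Z(1,b_{1,d-2})$, i.e.\ one double line (kept \emph{off} $Q$, so that it survives the residual intact and contributes two double points of $Q$ to the trace) together with $b_{1,d-2}$ lines, two of which lie in $|\Oo_Q(1,0)|$ and are doubled to produce the other two double lines of $X$. The cohomology of $X_{1,d-2}$ is known by \cite{a}, and the construction works precisely because of the numerical coincidence $c_{1,d-2}\le c_{3,d}\le c_{1,d-2}+2$ (here $b_{1,5}=6$, $c_{1,5}=4$ and $b_{1,8}=15$, $c_{1,8}=5$). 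Without importing Aladpoosh's result in this role, or proving new instances of $C_e(0,3k-1)$ beyond what Lemmas \ref{c8} and \ref{3lu3} give, your argument does not close. Your treatment of the exceptional cases $X_{3,0}$, $X_{3,1}$ via Lemmas \ref{c2} and \ref{c4}, and the remark that the unique quartic through $X_{3,0}$ is $2Q$, are fine.
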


\begin{proof}
By Lemma \ref{c4} we may assume $d\ge 6$. Since $h^0(\Oo_{X_{3,0}}(d)) =3(3d+1)$, we have $b_{3,6} =3$, $c_{3,6}=6$, $b_{3,7} =6$, $c_{3,7}=6$, $b_{3,8}=10$, $c_{3,8}=0$, 
 $b_{3,9}=13$, $c_{3,9}=6$,  $b_{3,10}=17$, $c_{3,10}=6$,  $b_{3,11}=21$, $c_{3,11}=10$. First assume $d\equiv 0,2 \pmod{3}$,
i.e. assume $c_{0,d-2}=0$ (Remark
\ref{3lu4}). Let $U\subset \PP^3$ be a general union of $(d+1)d/6$ lines. Since any $3$ pairwise disjoint lines are contained
in a smooth quadric, we may assume $U =U'\cup U''$ with $U''$ union of $3$ general elements of $|\Oo_Q(1,0)|$ and $U'$ a
general union of $d(d+1)/6-3$ lines. We always have $b_{3,d}\ge b_{0,d-2}-3$. Let $E$ (resp. $F$) be a general union of
$b_{3,d}-b_{0,d-2}+3$ (resp. 
$b_{3,d}-b_{0,d}+4$) elements of $|\Oo_Q(1,0)|$. Set $X:= (\cup _{L\in U''}2L) \cup U'\cup E$ and $X':= (\cup _{L\in U''}2L) \cup
U'\cup F$. Since $\Res _Q(X)=\Res_Q(X')=U$, it is sufficient to use the residual exact sequence of $Q$. Now assume
$d\equiv 1\pmod{3}$, i.e. assume $d\in \{7,10\}$. We have $b_{1,5} =6$, $c_{1,5}=4$, $b_{1,8}=15$, $c_{1,8}=5$. We use the
previous proof using $X_{1,d-2}$. It works because $c_{1,d-2}\le c_{3,d}\le c_{1,d-2}+2$.
\end{proof}

\begin{proof}[Proof of Theorem \ref{i2}:]
Fix a general $X\in Z(3,b)$ and let $d$ be its critical value. By Lemma \ref{d1} it is sufficient to prove $h^0(\Ii _X(d-1)) =0$ and $h^1(\Ii _X(d)) =0$, i.e. it is sufficient to prove $F(3,d)$ and $E(3,d)$. By Lemma \ref{6lu4} we may assume $d\ge 12$. To prove $E(3,d)$ (resp. $F(3,d)$)
it is sufficient to have $C_3(0,d-2)$ (resp. $C_3(0,d-3)$) by Lemmas \ref{3lu6}, \ref{3lu7}  and to test some numerical inequalities, which are satisfied for $d\ge 11$ because $a=3$ and hence $d-1>3a$ (apply Lemma \ref{n3lu1} to the integers $d$ and $d-1$).
To have $C_3(0,d-2)$ and $C_3(0,d-3)$ use Lemmas \ref{c8} and \ref{n3lu1}. The line bundle $\Oo_{\PP^3}(d)$ with largest $d$  not covered in this way is $\Oo_{\PP^3}(13)$, i.e., since
$b_{3,13}=31$ and $c_{3,13} =6$, we need to prove $h^1(\Ii _{X_{3,31}}(13))=0$ and $h^0(\Ii_{X_{3,32}}(13)) =0$. We use that $c_{0,11} =4\le c_{3,13}$. Let $U\subset \PP^3$ be a general union of $b_{0,1}=30$ lines. Thus $h^1(\Ii _U(11))=0$ and $h^0(\Ii _U(11))=4$. Since any $3$ pairwise disjoint lines are contained in a smooth quadric, we may assume that $U =U'\cup L_1\cup L_2\cup L_3$, where $L_1,L_2,L_3$ are general elements of $|\Oo_Q(1,0)|$. Take a general union $E\subset Q$ of $4$ elements of $ |\Oo_Q(1,0)|$. Set $X:= 2L_1\cup 2L_2\cup 2L_3\cup U'\cup E$. Since $\Res_Q(X) =U$, the residual exact sequence of $Q$ shows that it is sufficient to prove $h^1(Q,\Ii _{U'\cap Q,Q}(3,13)) =0$. This is true, because $U'\cap Q$ is
formed by $54$ general points of $Q$. Take $X':= X\cup L\in Z(3,32)$, where $L\subset \PP^3$ is a general line. Since
$\Res_Q(X') =U\cup L$, $h^0(\Ii_{\Res_Q(X')}(11))=0$ (\cite{a}). Since $(U'\cup L)\cap Q$ is formed by $56$ general points
of $Q$, $h^0(Q,\Ii_{U'\cap Q}(3,13))=0$. Use the residual exact sequence of
$Q$.
\end{proof}

\begin{proof}[Proof of Theorem \ref{i3}:]
We need to prove that $h^0(\Ii _{X_{a,b}}(d)) =0$ if $b>b_{a,d}$ and $h^1(\Ii_{X_{a,b}}(d)) =0$ if $b\le b_{a,d}$. By Lemmas \ref{3lu6} and \ref{3lu7} it is sufficient to prove $C_a(0,d-2)$, $C_a(0,d-3)$ and to check a few numerical inequalities. The latter are true, because $d\ge 3a+2$ by assumption (Lemma \ref{n3lu1}). Suppose we need $C_a(0,3k+x)$
with $k\in \NN$ and $x\in \{-1,0,1\}$. For $x=0,1$ it is sufficient to assume $k\ge 3$ (Lemma \ref{c8}), which is true because $d-3\ge 10 =3\cdot 3+1$. For $x=-1$ it is sufficient to assume $k\ge 4$, which is satisfied if $d-3 \ge 11$.
The dependency on $a$ in these lemmas is hidden in the inequalities checked in Lemma \ref{n3lu1}.
\end{proof}

\begin{proof}[Proof of Theorem \ref{i4}:]
By Theorem \ref{i3} $X_{a,b}$ has maximal rank with respect to all line bundles $\Oo_{\PP^3}(d)$ with $d \ge 3a+2$. Thus it is sufficient to prove that $h^0(\Ii _{X_{a,b}}(3a+1)) =0$.
This is true because $h^0(\Ii _{X_{3,b+a-3}}(3a+1)) =0$ by Theorem \ref{i2} and our assumption $b+a-3 \ge \lceil(\binom{3a+4}{3} -3(9a+3+1))/(3a+2)\rceil$. 
\end{proof}

\end{document}